\theoremstyle{plain}
\newtheorem{theorem}{Theorem}[section]
\newtheorem{lemma}[theorem]{Lemma}
\newtheorem{prop}[theorem]{Proposition}
\newtheorem{cor}[theorem]{Corollary}
\newtheorem{rem}[theorem]{Remark}
\newtheorem{ex}[theorem]{Example}
\renewcommand{\b}{\begin{equation}}
\newcommand{\e}{\end{equation}}
\newcommand\C{{\mathbb C}}
\newcommand\Z{{\mathbb Z}}
\title[The second Ricci flow on complex parallelizable manifolds]{The behavior of the second Ricci flow on complex parallelizable manifolds}
\thanks{This work was supported by GNSAGA of INdAM and by the project PRIN \lq\lq Differential-geometric aspects of manifolds via Global Analysis''}
\subjclass[2020]{53C26, 35K96, 53E30}
\address{(Lucio Bedulli) Dipartimento di Ingegneria e Scienze dell'Informazione e Matematica, 
Universit\`a dell'Aquila, via Vetoio, 67100 L'Aquila, Italy}
\email{lucio.bedulli@univaq.it}
\address{(Luigi Vezzoni) Dipartimento di Matematica G. Peano \\ Universit\`a di Torino\\
Via Carlo Alberto 10\\
10123 Torino\\ Italy}
\email{luigi.vezzoni@unito.it}
\begin{document}
\author{Lucio Bedulli $\&$ Luigi Vezzoni}
\maketitle

\date{\today}
\begin{abstract}
We study the flow of Hermitian metrics governed by the second Chern-Ricci form on a compact complex manifold. The flow belongs to the family of Hermitian curvature flows introduced by Streets and Tian and it was considered by Lee in order to study compact Hermitian manifolds with almost negative Chern bisectional curvature. We show a regularity result on compact complex parallelizable manifolds and we prove that Chern-flat metrics are dynamically stable.    
\end{abstract}
\section{Introduction}
In 	\cite{HCF} Streets and Tian introduced a family of parabolic flows on complex manifolds which evolve an initial  Hermitian metric as 
\begin{equation}\label{HCF}
\partial_tg=-\tilde R+Q\,,
\end{equation}
where $\tilde R$ is the second Chern-Ricci-form and $Q$ is a quadratic form in the torsion of $g$. A key observation in \cite{HCF} is that equation \eqref{HCF} is strictly parabolic for any choice of $Q$ and the standard theory ensures the existence of a short-time solution.  For a suitable choice of $Q$ flow \eqref{HCF} preserves the pluriclosed condition $\partial \bar \partial \omega=0$; in this case the corresponding equation is called {\em pluriclosed flow} and it has been widely investigated in the literature (see e.g. \cite{mario,jordan,ST,ST3,ST4} and the references therein). In \cite{YU1} Ustinovskiy proved that for a different choice of $Q$ the corresponding Hermitian curvature flow, called the {\em positive Hermitian curvature flow}, preserves the Griffiths positivity of the metric and used this to 
obtain a characterization of the projective space in terms of  Chern curvature.  In \cite{AF9} Fei and Phong proved that  on conformally balanced manifolds the positive Hermitian curvature  flow is conformally equivalent to the type IIB flow introduced by Phong, Picard and Zhang in  \cite{AF8,AF7} and studied in \cite{AF2,estimates,AF3,AF4,AF5,AF6}. This gives a strong geometric motivation for studying the positive Hermitian curvature flow. 
In \cite{Lee, Lee2} Lee considered the Hermitian curvature flow 
\begin{equation}\label{Q=0}
\partial_tg=-\tilde R
\end{equation}
obtained by setting $Q=0$ and in \cite{Lee} he used the flow to show that a compact Hermitian manifold with Griffiths non-positive Chern curvature and negative first Chern Ricci curvature at one point admits a K\"ahler-Einstein metric.

 An equation formally similar to \eqref{Q=0} 
emerged also in the study of the pluriclosed flow. 
Any pluriclosed metric $g$ induces a canonical generalized Hermitian metric $G$ and the pluriclosed flow once regarded as a flow of generalized Hermitian metrics satisfies 
$$
G^{-1}\partial_tG=-\tilde R^{G}_g\,,
$$ 
see \cite{jordan,mario}.

\medskip 
In the present paper we study equation \eqref{Q=0} on compact complex parallizable manifolds, i.e., on compact quotients of complex Lie groups by lattices. In anology with the main theorem in  \cite{mario} which states the long-time existence and convergence of the pluriclosed flow on a compact complex manifold with a Bismut-flat Hermitian metric, it is natural to expect  that \eqref{Q=0} on compact parallelizable manifolds has a long-time solution for every initial datum which converges to a Chern-flat metric. 
We prove the long-time existence under an extra assumption. For a Hermitian metric $g$ on a complex parallelizable manifold we define the tensor $\Gamma$ whose components $\Gamma_{ij}^k$ with respect to a left-invariant $(1,0)$-frame are the Christoffel symbols of the Chern connection of $g$ and let 
$$
w_{k}:=\Gamma_{kr}^{r}\,. 
$$
The tensors $\Gamma$ and $w$ are well-defined since their definitions do not depend on the choice of the frame.   

\medskip
Our first result is the following 
 
\begin{theorem}[Long-time existence]\label{main1}
Let $g$ be a maximal time solution to the Hermitian curvature flow \eqref{Q=0} on a compact complex parallelizable manifold. Assume that $|w|_{g}$ is uniformly bounded, then  $g$ is defined in $M\times [0,\infty)$. 
\end{theorem}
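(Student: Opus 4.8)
The plan is to argue by contradiction, assuming the maximal existence time $T$ is finite and producing uniform a priori bounds that let one restart the flow at $T$. By the strict parabolicity of \eqref{Q=0} proved in \cite{HCF} and the associated Shi-type estimates, it is enough to bound $g$ and $g^{-1}$ (uniform equivalence with a fixed metric) together with the Chern curvature on $[0,T)$: these propagate to bounds on all covariant derivatives and produce a smooth limit $g(T)$, contradicting maximality. I would set everything in a global left-invariant $(1,0)$-frame $\{Z_i\}$, for which $[Z_i,\bar Z_j]=0$ and $[Z_i,Z_j]=c_{ij}^kZ_k$ with constant $c_{ij}^k$. Here the Chern Christoffel symbols are $\Gamma_{ij}^k=g^{k\bar l}Z_i(g_{j\bar l})$, so $w_k=\Gamma_{kr}^r=Z_k(\log\det g)$; the $(1,1)$-curvature is $R_{i\bar jk}{}^{p}=-\bar Z_j(\Gamma_{ik}^p)$, the first Chern-Ricci form is $\rho_{i\bar j}=-\bar Z_j(w_i)$, and a short computation rewrites the flow in the frame as $\partial_t g_{i\bar j}=\Delta_g g_{i\bar j}-g^{k\bar l}g^{p\bar q}(Z_k g_{i\bar q})(\bar Z_l g_{p\bar j})$, where $\Delta_g=g^{k\bar l}\bar Z_l Z_k$.

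The first step is a uniform two-sided bound on $\det g$. Tracing the flow gives $\partial_t\log\det g=-g^{i\bar j}\tilde R_{i\bar j}$, and since the two Chern scalar curvatures coincide (both equal the full trace $g^{i\bar j}g^{k\bar l}R_{i\bar jk\bar l}$, by relabelling the summation indices), this equals $-s=g^{i\bar j}\bar Z_j Z_i(\log\det g)$. Thus $u:=\log\det g$ solves the genuine heat equation $\partial_t u=\Delta_{g(t)}u$ along the flow, and the parabolic maximum principle yields $\min_M u(\cdot,0)\le u(\cdot,t)\le\max_M u(\cdot,0)$ on $[0,T)$. Next I would fix a left-invariant background metric $\hat g$, so that $\hat g^{i\bar j}$ has vanishing frame derivatives; then $\Delta_g(\operatorname{tr}_{\hat g}g)=\hat g^{i\bar j}\Delta_g g_{i\bar j}$ and the flow gives $(\partial_t-\Delta_g)\operatorname{tr}_{\hat g}g=-\hat g^{i\bar j}g^{k\bar l}g^{p\bar q}(Z_k g_{i\bar q})(\bar Z_l g_{p\bar j})\le 0$, the right-hand side being minus a manifestly nonnegative Hermitian norm of $\nabla g$. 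The maximum principle then bounds $\operatorname{tr}_{\hat g}g$ from above, and together with the lower bound on $\det g$ this gives uniform equivalence $c\,\hat g\le g(t)\le C\,\hat g$ on $[0,T)$.

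The core of the proof — and the step where the hypothesis on $w$ is indispensable — is upgrading metric equivalence to a uniform curvature bound. Unlike the K\"ahler case, the evolution above is a parabolic system with quadratic gradient nonlinearity, so equivalence of metrics alone does not control the second derivatives of $g$, i.e. the curvature $R_{i\bar jk}{}^p=-\bar Z_j\Gamma_{ik}^p$. I would derive a differential inequality of Bernstein type for $S:=|\Gamma|_g^2$, whose good term is $-|\bar\nabla\Gamma|^2\sim-|R|^2$; in computing $(\partial_t-\Delta_g)S$ one must commute frame derivatives, which introduces the structure constants $c_{ij}^k$ and, crucially, the torsion trace $w$. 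The genuinely dangerous contributions are precisely those linear in $w$ (together with lower-order terms polynomial in $\Gamma$, $c$ and $g$), and these are exactly the ones absorbed by the assumption that $|w|_g$ is uniformly bounded, via Cauchy--Schwarz against the good term and a maximum-principle argument with an auxiliary function such as $S+B\,\operatorname{tr}_{\hat g}g$. This yields a uniform bound on $|\Gamma|_g$, hence on the Chern curvature, on $[0,T)$.

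Finally, with uniform equivalence of the metrics and a uniform curvature bound in hand, the Shi-type estimates for the Hermitian curvature flow provide uniform bounds on all covariant derivatives of the curvature on $M\times[0,T)$. Hence $g(t)$ converges in $C^\infty$ as $t\to T^-$ to a Hermitian metric $g(T)$, and the short-time existence applied at $g(T)$ extends the solution beyond $T$, contradicting maximality. Therefore $T=\infty$, which is the assertion of the theorem. The main obstacle throughout is the gradient/curvature estimate of the third step, where the boundedness of $w$ is what makes the maximum-principle argument close.
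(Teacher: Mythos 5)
Your skeleton (uniform metric equivalence, then a Bernstein-type estimate for $S=|\Gamma|_g^2$ coupled with ${\rm tr}_{\hat g}g$, then a parabolic bootstrap) is the same as the paper's, and your first two steps are sound; indeed your route to the lower bound, via the heat equation $\partial_t\log\det g=\Delta_g\log\det g$ combined with the upper bound on ${\rm tr}_{\hat g}g$, is a legitimate alternative to the paper's maximum-principle computation for ${\rm tr}_g\hat g$. The genuine gap is in your third step, which is where the theorem actually lives. When one computes $(\partial_t-\Delta_g)S$, the commutation/Bianchi terms involve the \emph{full} torsion $T$, not its trace: after Cauchy--Schwarz against the good terms $-|\nabla\Gamma|_g^2-|\bar\nabla\Gamma|_g^2$ one is left with $(\partial_t-\Delta_g)S\le |T|_g^2\,S$, and since $T_{ij}^k=\Gamma_{ij}^k-\Gamma_{ji}^k-\hat T_{ij}^k$ the only available bound is $|T|_g\le 2S^{1/2}+|\hat T|_g$, giving $(\partial_t-\Delta_g)S\le 2|\hat T|_g^2S+8S^2$. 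So the dangerous term is \emph{quadratic in $S$}; it is not ``linear in $w$'', and it is not controlled by the hypothesis, because $|w|_g$ bounds only the trace part $\Gamma_{kr}^r$ of $\Gamma$ and says nothing about $T$. Consequently your auxiliary function $S+B\,{\rm tr}_{\hat g}g$ cannot close the maximum-principle argument: the good term contributed by the trace is only linear, $-BKS$, so at a maximum point you would face $0\le 2|\hat T|_g^2S+8S^2-BKS$, which yields no bound on $S$ for any fixed $B$.

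The missing idea is the logarithm. The paper takes $G=\log S+b\,{\rm tr}_{\hat g}g$: dividing by $S$ turns the quadratic term into $8S$, which $-bKS$ dominates once $b>8/K$. The price of the logarithm is the bad gradient term $+|\nabla S|_g^2/S^2$ in $(\partial_t-\Delta_g)\log S$, and \emph{this} is precisely where the hypothesis on $w$ enters: at a maximum of $G$ one has $\nabla S/S=-b\,\nabla\,{\rm tr}_{\hat g}g$, and since $\nabla_k\,{\rm tr}_{\hat g}g=\hat g^{\bar ba}\Gamma_{ka}^lg_{l\bar b}$ is, up to the equivalence constants, a trace of $\Gamma$, one gets $|\nabla\,{\rm tr}_{\hat g}g|_g^2\le K^2|w|_g^2$. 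In other words, the $w$-bound controls the gradient term created by the logarithm, not a reaction term in the evolution of $S$. Without this mechanism your step three produces no curvature bound, and the remaining bootstrap has nothing to run on. (A minor further point: to extend the flow at $t=T$ via Shi-type estimates as in \cite{HCF} one also needs a bound on $\nabla T$, which is a second-order quantity not controlled by $S$; the paper instead feeds the $C^1$ bound into the quasilinear parabolic theory of Ladyzhenskaya et al.\ and then applies Schauder estimates.)
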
 

The theorem and its proof are inspired by \cite{estimates} where it is  proved the long-time existence of the type IIB flow if the trace of torsion is bounded. 

The key point is that since Hermitian curvature flows are quasi-linear equations, a $C^1$-estimate for the solutions together with a non-degeneracy estimate of the metric is enough to have the long time existence (see \cite{S2,{AF7}}) and we prove that  in our geometric setting the non-degneracy estimate always holds and the bound on $|w|_g$ is sufficient to have the $C^1$-estimate. 

The proof is obtained in sections \ref{2} and \ref{3} 
and the computations roughly follow the
ideas of Yau's $C^3$-estimate in the proof of the Calabi conjecture \cite{Yau}. 
 
\medskip 
A similar setting is considered in \cite{S2,mario} where  the pluriclosed flow is studied on tori and Bismut-flat manifolds. In both cases it is proved that under suitable assumptions on the initial datum, the flow has a long time solution which converges to a Bismut-flat metric. Moreover, on compact complex parallelizable manifolds the first Bott-Chern class is vanishing, since they always admit Chern-flat metrics; hence \cite[Theorem 1]{Gill} implies that the Chern-Ricci flow \cite{TW} on a compact complex parallelizable manifold has always a long-time solution which converges to a Chern-flat metric for every initial datum.

About the convergence of flow \eqref{Q=0} to a Chern-flat metric in our setting, we prove in section \ref{conv}  that this happens when the initial metric is diagonal with respect to a left-invariant frame. Moreover, we establish the following stability result 

\begin{theorem}[Stability]\label{stability}
Let $M$ be a compact complex parallelizable manifold of complex dimension $n$ and let $\hat g$ be a Chern-flat Hermitian metric on $M$. Then there exists $\delta_0 >0$ such that for every Hermitian metric $g_0$  on $M$ satisfying
$$
\|g_0-\hat g\|_{H^{n+3}_{\hat g}}\leq \delta_0\,,
$$
then the maximal time solution $g$ to \eqref{Q=0} with starting point $g_0$ is defined in $M\times [0,\infty)$ and converges in $C^{\infty}$-topology to a Chern flat metric as $t \to \infty$.  
\end{theorem}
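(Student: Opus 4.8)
The starting point is that the stationary points of \eqref{Q=0} are the metrics with vanishing second Chern-Ricci form $\tilde R$, and that on a complex parallelizable manifold every left-invariant Hermitian metric is Chern-flat, hence stationary. These left-invariant metrics form a smooth finite-dimensional manifold $\mathcal E$ of equilibria, modeled on the positive Hermitian forms of the Lie algebra and of real dimension $n^2$, with $\hat g\in\mathcal E$. Since \eqref{Q=0} is strictly parabolic (so that, in contrast with the Ricci flow, no gauge fixing is needed), the natural strategy is to treat it as a quasilinear parabolic evolution equation near the manifold $\mathcal E$ and to invoke the generalized principle of linearized stability for equilibria that are \emph{normally stable}, in the sense of Pr\"uss, Simonett and Zacher. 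The plan is therefore: (i) set up the flow in the right functional-analytic framework; (ii) compute the linearization $L$ of $g\mapsto-\tilde R(g)$ at $\hat g$ and show that $\ker L=T_{\hat g}\mathcal E$, that $0$ is a semisimple eigenvalue, and that the remaining spectrum lies in $\{\mathrm{Re}<0\}$; (iii) conclude global existence and exponential convergence, and then bootstrap to $C^\infty$.

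For (i) I would work with the scale of Sobolev spaces $H^s_{\hat g}$ of Hermitian $(1,1)$-tensors. The threshold $s=n+3$ is dictated by the Sobolev embedding $H^{n+3}_{\hat g}\hookrightarrow C^{2,\alpha}$ on the real $2n$-dimensional $M$, which gives enough regularity to control the coefficients of the quasilinear operator and to place the problem in a phase space on which the linearization generates an analytic semigroup. Because $M$ is compact and $\hat g$ is smooth, $\tilde R$ is a second-order quasilinear elliptic operator in $g$ whose principal symbol is that of the Chern Laplacian $\hat g^{k\bar l}\N_k\N_{\bar l}$, so that $L$ generates an analytic semigroup and has discrete spectrum with real parts bounded above.

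The crucial and hardest step is (ii). I would compute $L$ in a fixed left-invariant $(1,0)$-frame, where the background Chern connection of $\hat g$ is flat, so that $\N\hat g=0$ and the curvature terms drop out, leaving a Bochner-type operator
\begin{equation*}
L h = \hat g^{k\bar l}\N_k\N_{\bar l} h + (\text{torsion terms}),
\end{equation*}
the lower-order part being built from the structure constants $c_{ij}^k$ of the Lie algebra, which encode the Chern torsion of $\hat g$. Integrating $\langle Lh,h\rangle$ over $M$ and using flatness, one expects a Bochner formula of the form $\int_M\langle Lh,h\rangle=-\int_M|\N h|^2+\ldots\le 0$, with equality forcing $\N h=0$, i.e. $h$ parallel; on a parallelizable manifold the parallel Hermitian tensors are exactly the left-invariant ones. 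This identifies $\ker L=T_{\hat g}\mathcal E$ and yields both semisimplicity of the eigenvalue $0$ and strict negativity of the rest of the spectrum. The delicate points are the sign and the precise contribution of the torsion terms, and the possibility that $L$ fails to be self-adjoint, so that sectoriality and the spectral splitting must be argued directly rather than through the spectral theorem. I expect this step, namely pinning down $\ker L$ and the sign of the non-kernel spectrum in the presence of torsion, to be the main obstacle.

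Granting (ii), the equilibrium $\hat g$ is normally stable: the integrability hypothesis is automatic here, since each kernel direction is tangent to the $n^2$-dimensional manifold $\mathcal E$ of left-invariant (hence Chern-flat) equilibria, which has exactly the dimension of $\ker L$. The generalized principle of linearized stability then furnishes a $\delta_0>0$ such that every $g_0$ with $\|g_0-\hat g\|_{H^{n+3}_{\hat g}}\le\delta_0$ produces a solution that exists for all $t\ge 0$, remains in a neighborhood of $\mathcal E$, and converges in $H^{n+3}_{\hat g}$ at an exponential rate to some $g_\infty\in\mathcal E$; by the kernel computation and the implicit function theorem the stationary set near $\hat g$ coincides with $\mathcal E$, so $g_\infty$ is Chern-flat. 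Finally, since $|w|_g$ stays bounded along such a solution, Theorem \ref{main1} already guarantees immortality, and the standard parabolic smoothing and interior estimates for \eqref{Q=0} upgrade the exponential $H^{n+3}_{\hat g}$ convergence to convergence in the $C^\infty$-topology, completing the proof.
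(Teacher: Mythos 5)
Your proposal takes a genuinely different route from the paper, and in outline it works. The paper never invokes an abstract stability theorem: its Lemma \ref{L2} proves exponential $L^2$-decay of $\tilde R(g(t))$ directly, using the spectral gap of $-\Delta_{\hat g}$ (the same spectral information you need) and controlling the component of $\tilde R$ along the kernel --- the averaged tensor $\tilde R^0$, which is the concrete counterpart of your directions tangent to $\mathcal{E}$ --- by elliptic regularity; Lemma \ref{inter} then upgrades $L^2$-decay to $H^m$-decay via Hamilton's interpolation inequalities and the Shi-type estimates of Streets--Tian, and the theorem follows from a continuity argument in the $C^2_{\hat g}$-norm, integrating $\partial_t g=-\tilde R$ in time; finally $\tilde R(g_\infty)=0$ forces $g_\infty$ to be Chern-flat by a maximum-principle argument based on \eqref{RicDelta}. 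Your plan replaces all of this by the Pr\"uss--Simonett--Zacher theorem on normally stable equilibria, and its hypotheses do hold here. In fact, the step you flag as ``the main obstacle'' dissolves: by \eqref{RicDelta}, in a left-invariant frame $\hat g$ has constant components, so the linearization at $\hat g$ of the term quadratic in first derivatives vanishes, and the term coming from varying $g^{\bar lk}$ in $\Delta_g$ is contracted against derivatives of $\hat g$, which also vanish; hence $L=\Delta_{\hat g}$ acting componentwise, with no torsion contribution at all. Self-adjointness and seminegativity then follow because $\hat g$ is balanced (equivalently, the left-invariant fields are divergence-free), and the kernel consists exactly of the constant-component, i.e. left-invariant, Hermitian tensors, which is $T_{\hat g}\mathcal{E}$ and has real dimension $n^2$; thus $0$ is semisimple and the nonzero spectrum is strictly negative. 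This is precisely the linear-stability observation recorded in the paper's introduction. What your route still owes is the functional-analytic setup for the abstract theorem (an $L_p$-maximal-regularity or continuous-interpolation phase space whose trace space absorbs $H^{n+3}_{\hat g}$-small data, via $H^{n+3}_{\hat g}\hookrightarrow C^{2,\alpha}$); what it buys is exponential convergence to an identified point of the equilibrium manifold without Shi-type estimates, interpolation inequalities, or the continuity argument. Your closing appeal to Theorem \ref{main1} is redundant, since global existence is already part of the conclusion of the stability theorem you invoke.
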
 
In \cite{HCF} it is proved that every Hermitian curvature flow is dynamically stable around K\"ahler-Einstein metrics with nonpositive scalar curvature. This theorem cannot be applied directly in order to prove Theorem \ref{stability} since $\hat g$ is not in general K\"ahler. However, since 
 $\hat g$ is Chern-flat, we have that the first variation $L$ of the operator $g\mapsto -\tilde R(g)$ at $\hat g$ is the Chern Laplacian of $\hat g$
which, since $\hat g$ is balanced (i.e. its fundamental form is coclosed), it is seminegative definite and self-adjoint with respect to the $L^2$-product induced by $\hat g$.  Therefore Chern-flat metrics are linearly stable. The proof of Theorem \ref{stability} follows the approach of similar results in literature (see e.g. 
\cite{StabilityPhong,BVadv,stabilitytypeIIA,reto,sesum,HCF} and the references therein). 

\bigskip
\noindent {\bf Acknowledgement.} The authors are grateful to Jeffrey Streets and James Stanfield  for useful conversations and observations. Moreover the authors would like to thank an anonymous referee for pointing out some inaccuracies and helping to improve the presentation of the paper. 

\section{Preliminaries}\label{2}
Let $M$ be a complex manifold equipped with a Hermitian metric $g$.  If $g$ is not K\"ahler, i.e. if the fundamental form $\omega$ of $g$ is not closed, then the Levi-Civita connection of $g$ does not preserve the complex structure and its role is often replaced by a Hermitian connection, i.e. a connection which preserves both the complex and the Riemannian metric. On a Hermitian manifold there are always many Hermitian connections and canonical choices can be obtained by imposing conditions on the torsion. The Chern connection is then defined as the unique Hermitian  connection $\nabla$ on $(M,g)$ whose torsion $T$ has vanishing $(1,1)$-component. The Christoffel symbols of $\nabla$ read as 
$$
\Gamma_{ij}^k=g^{\bar rk}g_{j	\bar r,i}
$$
and the components of the torsion tensor of $\nabla$ are 
$$
T_{ij}^k=g^{\bar rk}g_{j	\bar r,i}-g^{\bar rk}g_{i	\bar r,j}\,.
$$
Moreover, denoting by $R$ the curvature tensor of $\nabla$, we have 
\begin{equation}\label{tildeR}
R_{k\bar l r	\bar s  }=-g_{u\bar s}\nabla_{\bar l}\Gamma_{kr}^{u}=-g_{r \bar s,k\bar l}+g^{\bar b a}g_{a\bar s,\bar l }g_{r	\bar b,k}\,. 
\end{equation}
We can consider the two Ricci-type tensors 
$$
{\rm Ric}_{r\bar s}=g^{\bar lk}R_{r\bar sl\bar k }\,,\quad 
\tilde R_{r\bar s}=g^{\bar lk}R_{l\bar k r\bar s}\,.
$$
called the {\em first} and {\em second Chern-Ricci tensor}, respectively.
In the non-K\"ahler case these two tensors do not in general agree.  
In particular for the second Chern-Ricci tensor we have
\begin{equation}\label{RicDelta}
\tilde R_{r\bar s}=-\Delta_g g_{r \bar s}+g^{\bar lk}g^{\bar b a}g_{a\bar s,\bar l }g_{r	\bar b,k}\,,
\end{equation}
where $\Delta_g$ is the Chern Laplacian of the metric $g$.

\medskip 
Next we focus on the case when $M$ is a complex {\em parallelizable manifold},  i.e. when $M$  admits a global holomorphic coframe.
It is a result of Wang \cite{wang} that a {\it compact} complex  manifold $M$ is complex parallelizable if and only if it is the quotient  of a complex Lie group $G$ by a lattice $\Gamma$. Indeed, if $M$ is the quotient of a simply connected complex Lie group $G$ by a lattice, any left-invariant $(1,0)$-coframe on $G$ is holomorphic and induces a global holomorphic coframe on $M$; on the the other hand if $M=G/\Gamma$ is complex parallelizable any global holomorphic coframe on $M$ gives a left-invariant $(1,0)$-coframe on $G$. 

Let $M$ be a complex parallelizable manifold with a fixed global holomorphic coframe $\{\zeta^1,\dots,\zeta^n\}$. Then its dual $(1,0)$-frame 
 $\{Z_1,\dots,Z_n\}$ satisfies     
\begin{equation}\label{zk}
[Z_k,Z_{\bar l}]=0\quad \mbox{ for every } k,l\,.  
\end{equation}
Moreover a Hermitian metric $g$ on $M$ lifts to a left-invariant metric on the universal cover of $M$ if and only if
its components  $g_{i\bar j}$ with respect to the frame $\{Z_1,\dots,Z_n\}$ are constant.  
In such a case the holonomy of the Chern connection of $g$ is vanishing. That can be observed as follows:  
$$
g(\nabla_{k}Z_i,Z_{\bar l})=Z_kg(Z_i,Z_{\bar l})-g(Z_i,\nabla_kZ_{\bar l})=
-g(Z_i,\nabla_{\bar l}Z_{k})=0
$$
for every $i,k,l=1,\dots,n$, which implies 
$$
\nabla_{k}Z_i=0\,,\quad \mbox{ for all } k,i=1,\dots,n\,.
$$ 
In particular $g$ is Chern-flat and the torsion of the Chern connection of $g$ satisfies  
$$
T(Z_i,Z_j)=-[Z_i,Z_j]\,. 
$$
Moreover $g$ is {\em balanced}, i.e. the torsion $T$ satisfies
$$
T_{ir}^r=0\,.
$$
This is equivalent to require that the fundamental form of $g$ is co-closed. 
  
\begin{ex}{\em 
The basic example of complex parallelizable manifold is the Iwasawa manifold 
$$
M={\rm Nil}_3(\C)/\Gamma
$$
which is defined as the quotient of the complex $3$-dimensional Heisenberg group
$$
{\rm Nil}_3(\C):=\left\{\left(\begin{array}{ccc}
1 &z^1  & z^2\\
0 & 1 &z^3\\
0   &0     &1
\end{array}
\right)\,\,:\,\, z_1,z_2,z_3\in \mathbb{C} \right\}
$$
by the subgroup $\Gamma$ of elements of ${\rm Nil}_3(\C)$  whose entries are Gauss integers.  The Iwasawa manifold has a canonical complex structure induced by $\C^3$ and a global $(1,0)$-coframe $\{\zeta^1,\zeta^2,\zeta^3\}$ satisfying
$$
d\zeta^1=d\zeta^2=0\,,\quad d\zeta^3=\zeta^{1}\wedge \zeta^2\,.
$$
Since ${\rm Nil}_3(\C)$ is nilpotent, the Iwasawa manifold is a nilmanifold and it has a natural structure of principal torus bundle over a torus of complex dimension $2$.  
}
\end{ex}

\begin{ex}
{\em $3$-dimensional complex unimodular Lie groups are classified. Beside the abelian group, there are only three simply-conneted complex unimodular Lie groups in dimension 3: 
${\rm SL}(2,\mathbb{C})$, ${\rm Nil}_3(\C)$, $S_{3,-1}$. 

\medskip 
\noindent ${\rm SL}(2,\mathbb{C})$  is a simple Lie group and has left-invariant $(1,0)$-frame ${\\Z_1,Z_2,Z_3}$ satisfying 
$$
[Z_1,Z_2]=Z_3\,,\quad [Z_1,Z_3]=-Z_2\,,\quad [Z_2,Z_3]=Z_1\,,
$$
and the other brackets are zero. 
In matrix notation we have 
$$
Z_1=\frac12 \left(\begin{array}{cc}
0 &i\\
i & 0
\end{array}
\right)\,,\quad  Z_2=\frac12 \left(\begin{array}{cc}
0 &1\\
-1 & 0
\end{array}
\right)\,,\quad 
Z_3=\frac12 \left(\begin{array}{cc}
-i &0\\
0 & i
\end{array}
\right)\,.
$$

\medskip 
\noindent
$S_{3,-1}$ is $2$-step solvable (the notation comes form the classification of $6$-dimensional real solvable Lie group with a left-invariant complex structure). The group has a left-invariant $(1,0)$-frame satisfying
$$
[Z_1,Z_2]=Z_2\,,\quad  [Z_1,Z_3]=-Z_3
$$
and the other brackets are zero. 
}
\item 
\end{ex}

\begin{ex}
{\em The Lie groups $S_{3,-1}$ belongs to the family $S_{3,\lambda}$ of  $3$-dimensional solvable complex Lie groups having structure equations 
$$
[Z_1,Z_2]=Z_2\,,\quad  [Z_1,Z_3]=\lambda Z_3\,.
$$
The group is unimodular only for $\lambda=-1$.
}
\end{ex}

\begin{rem}
{\em $4$-dimensional and $5$-dimensional complex solvable Lie groups are classified by Nakamura \cite{Naka}.}
\end{rem}

\section{Uniform Estimates}\label{estimates} 
Let $\hat g$ be  a Chern-flat metric  on a compact complex parellalizable manifold $M$ and let $g=g(t)$ be a solution to the second Chern-Ricci flow  \eqref{Q=0} on $M$ in the interval $[0,\varepsilon]$.  We fix on $M$ a global holomorphic coframe $\{\zeta^1,\dots,\zeta^n\}$ and denote by  $\{Z_1,\dots,Z_n\}$ the dual $(1,0)$-frame. All computations are done with respect to this frames.

\begin{lemma}\label{lemma1}
There exist uniform positive constants $K$ and $\tilde K$ such that 
$$
K\hat g\leq g(t) \leq\tilde  K \hat g
\quad \mbox{for every $t \in [0,\varepsilon]$}\,.
$$
\end{lemma}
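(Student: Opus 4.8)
The plan is to control the eigenvalues of $g(t)$ relative to the fixed background $\hat g$ by applying the parabolic maximum principle to two scalar quantities built from $g$: the trace ${\rm tr}_{\hat g}g$ for the upper bound and the determinant $\det(\hat g^{-1}g)$ for the lower bound. Since $\hat g$ is Chern-flat and left-invariant, after a constant linear change of the holomorphic coframe we may assume $\hat g_{i\bar j}=\delta_{ij}$, so that $\hat g$ has constant components in the frame $\{Z_1,\dots,Z_n\}$ and traces with respect to $\hat g$ commute with every frame derivative; this change preserves \eqref{zk}. Writing the flow in components by means of \eqref{RicDelta}, the evolution reads $\partial_t g_{r\bar s}=\Delta_g g_{r\bar s}-g^{\bar lk}g^{\bar ba}g_{a\bar s,\bar l}g_{r\bar b,k}$, where $\Delta_g g_{r\bar s}=g^{\bar lk}g_{r\bar s,k\bar l}$ is the Chern Laplacian applied to the component functions. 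The structural input, used repeatedly, is \eqref{zk}: because $[Z_k,Z_{\bar l}]=0$, the complex Hessian $(Z_kZ_{\bar l}f)$ of a real function $f$ is negative (resp. positive) semidefinite at an interior spatial maximum (resp. minimum), so the time-dependent elliptic operator $\Delta_g=g^{\bar lk}Z_kZ_{\bar l}$ obeys the parabolic maximum principle with no zeroth order correction.

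For the upper bound I would set $u:={\rm tr}_{\hat g}g=\hat g^{\bar sr}g_{r\bar s}$ and compute its evolution. Since $\hat g^{\bar sr}$ is constant, $\Delta_g u=\hat g^{\bar sr}\Delta_g g_{r\bar s}$ and hence
\[
(\partial_t-\Delta_g)u=-\hat g^{\bar sr}g^{\bar lk}g^{\bar ba}g_{a\bar s,\bar l}g_{r\bar b,k}=:-Q.
\]
The quantity $Q$ is a Hermitian square: regarding $P_{r\bar bk}:=g_{r\bar b,k}$ as a vector and using $g_{a\bar s,\bar l}=\overline{g_{s\bar a,l}}$, one recognizes $Q=\langle P,P\rangle$ with respect to the positive definite form $\hat g^{-1}\otimes g^{-1}\otimes g^{-1}$, so $Q\ge 0$. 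Thus $u$ is a subsolution of the heat equation, $\max_M u(t)$ is non-increasing, and since the largest eigenvalue of $g$ with respect to $\hat g$ is dominated by $u$ this yields $g(t)\le \tilde K\hat g$ with $\tilde K=\max_M {\rm tr}_{\hat g}g_0$.

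For the lower bound I would study $\sigma:=\log\det(\hat g^{-1}g)$. Differentiating and using the formulas for the derivative of $\log\det$ and of an inverse matrix gives $(\partial_t-\Delta_g)\sigma=T_1-T_2$, where $T_1=g^{\bar lk}g^{\bar sa}g^{\bar br}g_{a\bar b,\bar l}g_{r\bar s,k}$ comes from the Laplacian of the determinant and $T_2=g^{\bar sr}g^{\bar lk}g^{\bar ba}g_{a\bar s,\bar l}g_{r\bar b,k}$ comes from the reaction term in \eqref{RicDelta}. The key observation is that these two expressions are \emph{equal}: relabelling the dummy indices $b\leftrightarrow s$ in $T_2$ turns it into $T_1$ verbatim. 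Hence $(\partial_t-\Delta_g)\sigma=0$, so $\sigma$ solves the (time dependent) heat equation and $\min_M\sigma(t)$ is non-decreasing, giving $\det(\hat g^{-1}g)\ge c_0:=\min_M\det(\hat g^{-1}g_0)>0$ uniformly in $t$. Combining this with the upper bound, if $\lambda_1\le\cdots\le\lambda_n$ denote the eigenvalues of $g$ with respect to $\hat g$ then $\lambda_n\le \tilde K$ and $\prod_i\lambda_i\ge c_0$, whence $\lambda_1\ge c_0/\tilde K^{\,n-1}=:K>0$ and therefore $K\hat g\le g(t)$.

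The main obstacle, and the point I would check most carefully, is the cancellation $T_1=T_2$ that makes $\log\det(\hat g^{-1}g)$ evolve by the pure heat equation; it is exactly this identity, special to the invariant frame on a parallelizable manifold, that produces the determinant lower bound with no second order (Schwarz-lemma type) estimate. The remaining delicate points are pure bookkeeping: correctly expanding $(\partial_t-\Delta_g)\sigma$ from \eqref{RicDelta}, verifying the positivity of $Q$, and invoking \eqref{zk} to ensure that the extrema of $u$ and $\sigma$ feed into the maximum principle with the right sign. I note that the resulting constants depend only on $g_0$ and $\hat g$, so the bounds are in fact uniform on all of $[0,\varepsilon]$.
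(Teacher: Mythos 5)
Your proposal is correct. The upper bound is exactly the paper's argument: both compute $(\partial_t-\Delta_g)\,{\rm tr}_{\hat g}g = -Q$ with $Q$ a Hermitian square and apply the maximum principle. For the lower bound, however, you take a genuinely different route. The paper applies the maximum principle a second time, to ${\rm tr}_{g}\hat g = g^{\bar lk}\hat g_{k\bar l}$: a computation of the same type shows $(\partial_t-\Delta_g)\,{\rm tr}_g\hat g\leq 0$, and since ${\rm tr}_g\hat g=\sum_i \lambda_i^{-1}$ this bounds each $\lambda_i^{-1}$ directly, giving $g\geq K\hat g$ \emph{independently} of the upper bound. You instead show that $\sigma=\log\det(\hat g^{-1}g)$ solves the pure heat equation $(\partial_t-\Delta_g)\sigma=0$ and combine the resulting determinant lower bound with the trace upper bound to pinch the smallest eigenvalue, $\lambda_1\geq c_0/\tilde K^{\,n-1}$. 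Your key cancellation $T_1=T_2$ is indeed correct: writing both terms as $g^{\bar lk}\,{\rm tr}\bigl[(g^{-1}\partial_k g)(g^{-1}\partial_{\bar l}g)\bigr]$, they agree by relabeling dummy indices (equivalently, cyclicity of the trace), and the commutation \eqref{zk} justifies interchanging $Z_k Z_{\bar l}$ in the Laplacian of $\sigma$. Comparing the two: the paper's argument is more economical (each bound is a one-shot subsolution estimate, and the lower bound does not degrade with the upper one), while your determinant identity is a nice structural fact — it is precisely the non-diagonal form of the identity $\tilde R_{r\bar r}=-g_{r\bar r}\Delta_g\log g_{r\bar r}$ that the paper itself exploits in Proposition \ref{scase}, and it explains why no Schwarz-lemma-type second-order term is needed in this parallelizable setting. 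The price is that your constant $K=c_0/\tilde K^{\,n-1}$ is coupled to $\tilde K$, which is harmless here since both constants are uniform on $[0,\varepsilon]$.
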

\begin{proof}
Working as in \cite {estimates} and \cite{S2} we directly compute  
$$
\begin{aligned}	
(\partial_t-\Delta_g)\,{\rm tr}_{\hat g}g=&(\partial_t-\Delta_g) (\hat g^{\bar s r}g_{s \bar r})=
\hat g^{\bar s r}\dot g_{s \bar r}-\hat g^{\bar s r}g^{\bar lk}g_{r\bar s,k\bar l}
=-\hat g^{\bar s r}\tilde R_{r\bar s}-\hat g^{\bar s r}g^{\bar lk}g_{r\bar s,k\bar l}\\
=&\hat g^{\bar s r}g^{\bar lk}g_{r\bar s,k\bar l}-\hat g^{\bar s r}g^{\bar l k}g^{\bar b a}g_{a\bar l,\bar s }
g_{k	\bar b,r}
-\hat g^{\bar s r}g^{\bar lk}g_{r\bar s,k\bar l}
= -\hat g^{\bar s r}g^{\bar l k}g^{\bar b a}g_{a\bar l,\bar s }g_{k	\bar b,r}\,,
\end{aligned}
$$
hence,
\begin{equation}\label{prepre}
(\partial_t-\Delta_g)\,{\rm tr}_{\hat g}g\leq 0
\end{equation}
and the maximum principle implies 
$$
g\leq \tilde K \hat g
$$
for a uniform constant $K$. Moreover, 
$$
\begin{aligned}
(\partial_t-\Delta_g)\,{\rm tr}_{ g}\hat g=&(\partial_t-\Delta_g) (g^{\bar l k} \hat g_{k \bar l})=
- g^{\bar l \alpha} \dot g_{\alpha\bar \beta} g^{\bar \beta k} \hat g_{k \bar l}+g^{\bar r s}(g^{\bar l \alpha}  g_{\alpha\bar \beta,s} g^{\bar \beta  k})_{,\bar r} \hat g_{k \bar l}\\
=& g^{\bar l \alpha} \tilde R_{\alpha\bar \beta} g^{\bar \beta k} \hat g_{k \bar l}
-g^{\bar r s}g^{\bar l \gamma}g_{\gamma\bar \delta,\bar r}g^{\bar \delta \alpha}  g_{\alpha\bar \beta,s} g^{\bar \beta k} \hat g_{k \bar l}
+g^{\bar l \alpha} \Delta_g  g_{\alpha\bar \beta} g^{\bar \beta k} \hat g_{k \bar l}\\
&-g^{\bar r s}g^{\bar l \alpha}  g_{\alpha\bar \beta,s} g^{\bar \beta \gamma}g_{\gamma \bar \delta,\bar r}g^{\bar 	\delta k} \hat g_{k \bar l}\\
=&g^{\bar l \alpha} g^{\bar qp}g^{\bar b a}g_{a\bar \beta,\bar q }g_{\alpha	\bar b,p} g^{\bar \beta k} \hat g_{k \bar l}
-g^{\bar r s}g^{\bar l \gamma}g_{\gamma\bar \delta,\bar r}g^{\bar \delta \alpha}  g_{\alpha\bar \beta,s} g^{\bar \beta k} \hat g_{k \bar l}\\
&-g^{\bar l \alpha} g^{\bar r s} g^{\bar \beta \gamma} g_{\alpha\bar \beta,s} g_{\gamma \bar \delta,\bar r}g^{\bar 	\delta k} \hat g_{k \bar l}\\
=&-g^{\bar r s}g^{\bar l \gamma}g_{\gamma\bar \delta,\bar r}g^{\bar \delta \alpha}  g_{\alpha\bar \beta,s} g^{\bar \beta k} \hat g_{k \bar l}\leq 0
\end{aligned}
$$
and the maximum principle implies the thesis. 
\end{proof}
Let us denote by $\Gamma$ the tensor 
$$
\Gamma=\Gamma_{ij}^k\zeta^i\otimes \zeta^j\otimes Z_k
$$
and set 
$$
S:=|\Gamma|^2_{g}\,.
$$
Hence we have 
\begin{equation}\label{normgamma}
S=g_{k\bar q}g^{\bar mi}g^{\bar pj}\Gamma_{ij}^k \Gamma_{\bar m\bar p}^{\bar q}=g_{k\bar q}g^{\bar mi}g^{\bar pj}g^{\bar r k}g_{j\bar r,i}g^{\bar q s}g_{s\bar p,\bar m}=
g^{\bar mi}g^{\bar pj}g^{\bar r k}g_{j\bar r,i}g_{k\bar p,\bar m}\,.
\end{equation}

\begin{lemma}
$$
(\partial_t-\Delta_g)S\leq |T|_{g}^2\,S\,.
$$
\end{lemma}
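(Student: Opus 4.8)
The plan is to regard $S=|\Gamma|^2_g$ as a scalar and to compute $\partial_tS$ and $\Delta_gS$ separately in the fixed holomorphic frame, using \eqref{zk} throughout: because $[Z_k,Z_{\bar l}]=0$, the Chern Laplacian on functions is simply $\Delta_g=g^{\bar lk}Z_kZ_{\bar l}$ with no first–order correction, mixed second derivatives commute, and the curvature formula \eqref{tildeR} holds on the nose. The conceptual engine, however, is a Bochner/Weitzenb\"ock identity for the tensor $\Gamma$, so the first step is to determine how $\Gamma$ itself evolves. Since $\partial_t$ commutes with the (time–independent) frame, differentiating $\Gamma_{ij}^k=g^{\bar rk}g_{j\bar r,i}$ and using $\partial_tg_{j\bar r}=-\tilde R_{j\bar r}$ yields the variation formula $\partial_t\Gamma_{ij}^k=-g^{\bar rk}\nabla_i\tilde R_{j\bar r}$. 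Tracing \eqref{tildeR} gives $\tilde R_{j\bar r}=-g^{\bar lm}g_{u\bar r}\nabla_{\bar l}\Gamma_{mj}^u$, and moving the parallel metric factors through $\nabla_i$ produces the clean expression
\[
\partial_t\Gamma_{ij}^k=g^{\bar lm}\nabla_i\nabla_{\bar l}\Gamma_{mj}^k .
\]

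This is to be compared with the rough Laplacian $\Delta_g\Gamma_{ij}^k=g^{\bar lm}\nabla_m\nabla_{\bar l}\Gamma_{ij}^k$: the two differ only in which of the indices $i,m$ carries the holomorphic derivative and which is contracted. Rewriting both through $\nabla R$ via \eqref{tildeR} and invoking the second Bianchi identity for the Chern connection — which, the connection having torsion, reads $\nabla_iR_{m\bar lj\bar s}-\nabla_mR_{i\bar lj\bar s}=T_{im}^{p}R_{p\bar lj\bar s}$ (the mixed and $(2,0)$ components of torsion and curvature dropping out thanks to \eqref{zk}) — I expect the principal parts to cancel and leave the reaction identity
\[
(\partial_t-\Delta_g)\Gamma_{ij}^k=g^{\bar lm}T_{im}^{p}\nabla_{\bar l}\Gamma_{pj}^k .
\]
Thus under the heat operator $\Gamma$ picks up exactly one torsion factor, of schematic type $T\ast\overline{\nabla}\Gamma$; here $T$ is the genuine Chern torsion $T(Z_i,Z_m)=\nabla_iZ_m-\nabla_mZ_i-[Z_i,Z_m]$, which is what enters $|T|_g$.

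Next I would feed this into the Weitzenb\"ock expansion of the scalar Laplacian. Expanding $\Delta_gS=g^{\bar lm}\nabla_m\nabla_{\bar l}\langle\Gamma,\Gamma\rangle$ by the metric–compatible Leibniz rule produces the two manifestly nonnegative gradient terms $|\nabla\Gamma|^2_g$ and $|\overline{\nabla}\Gamma|^2_g$, together with $\langle\Delta_g\Gamma,\Gamma\rangle$, its conjugate, and a curvature term arising from commuting $\nabla_m$ and $\nabla_{\bar l}$ (a Ricci identity), which is the second Chern–Ricci tensor $\tilde R$ contracted with $\Gamma\otimes\overline{\Gamma}$ along the free indices. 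On the other hand $\partial_tS$ gives $2\,\mathrm{Re}\,\langle\partial_t\Gamma,\Gamma\rangle$ plus the terms obtained by differentiating the three metric factors $g_{k\bar q},g^{\bar mi},g^{\bar pj}$ in the first expression of \eqref{normgamma}, which are again $\tilde R$ contracted with $\Gamma\otimes\overline{\Gamma}$ along the same indices. The delicate point — and the step I expect to cost the most — is verifying that these two batches of $\tilde R\ast\Gamma\ast\overline{\Gamma}$ terms cancel exactly: the signs should match because in $S$ raising a lower index contributes $+\tilde R$ while the single upper index contributes $-\tilde R$, mirroring the Ricci identity, and this is precisely where the specific algebra of the choice $Q=0$ is used. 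Granting the cancellation, I am left with
\[
(\partial_t-\Delta_g)S=-|\nabla\Gamma|^2_g-|\overline{\nabla}\Gamma|^2_g+2\,\mathrm{Re}\,\langle(\partial_t-\Delta_g)\Gamma,\Gamma\rangle .
\]

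Finally the reaction term $2\,\mathrm{Re}\,\langle g^{\bar lm}T_{im}^{p}\nabla_{\bar l}\Gamma_{pj}^k,\Gamma\rangle$ is of type $T\ast\overline{\nabla}\Gamma\ast\Gamma$, and I would estimate it by the pointwise Young inequality $2\,\mathrm{Re}\,\langle\overline{\nabla}\Gamma,T\ast\Gamma\rangle\le|\overline{\nabla}\Gamma|^2_g+|T\ast\Gamma|^2_g$ followed by $|T\ast\Gamma|^2_g\le|T|^2_g|\Gamma|^2_g=|T|^2_g\,S$. The $|\overline{\nabla}\Gamma|^2_g$ so produced is absorbed by the corresponding negative gradient term, and discarding the remaining $-|\nabla\Gamma|^2_g\le0$ gives precisely $(\partial_t-\Delta_g)S\le|T|^2_g\,S$. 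The three places I expect to have to work hardest are: pinning down the torsion–corrected second Bianchi identity with the correct sign in this non–holonomic frame; the exact cancellation of the $\tilde R\ast\Gamma\ast\overline{\Gamma}$ terms, which must be organized index–by–index; and arranging the final Young splitting so that the constant in front of $|T|^2_g\,S$ comes out to be exactly $1$ rather than a larger harmless constant.
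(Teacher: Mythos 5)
Your proposal is correct and is essentially the paper's own argument: the same evolution equation $\partial_t\Gamma_{ij}^k=-\nabla_i\tilde R_j{}^k$, the same Bochner expansion of $\Delta_gS$ with exact cancellation of the $\tilde R\ast\Gamma\ast\overline{\Gamma}$ terms against those coming from differentiating the metric factors in $S$, the same torsion-corrected second Bianchi identity converting the leftover $\nabla R$ terms into $T\ast\overline{\nabla}\Gamma$, and the same Young inequality absorbing $|\overline{\nabla}\Gamma|_g^2$; the only difference is bookkeeping (you first assemble a tensor-level heat equation for $\Gamma$ and then contract with $\Gamma$, while the paper performs the whole computation directly at the level of $S$). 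The single slip --- your Bianchi identity $\nabla_iR_{m\bar lj\bar s}-\nabla_mR_{i\bar lj\bar s}=T_{im}^{p}R_{p\bar lj\bar s}$ carries the torsion term with the sign opposite to the one the paper's computation uses --- is harmless, since only the magnitude of the resulting $T\ast\overline{\nabla}\Gamma\ast\overline{\Gamma}$ term enters the final Cauchy--Schwarz/Young estimate.
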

\begin{proof}
We first note that along the flow 
$$
\begin{aligned}
\partial_t(g^{\bar r k}g_{j\bar r,i})=&\,-g^{\bar rs}\dot g_{s\bar m}g^{\bar m k}g_{j\bar r,i}+g^{\bar r k}\dot g_{j\bar r,i}=g^{\bar rs}\tilde R_{s\bar m}g^{\bar m k}g_{j\bar r,i}-g^{\bar r k}\tilde R_{j\bar r,i}\\
=&\,g^{\bar m k}\tilde R_{s\bar m}\Gamma^{s}_{ij}-g^{\bar r k}\tilde R_{j\bar r,i}=-g^{\bar r k}\nabla_i\tilde R_{j\bar r}\,,
\end{aligned}
$$
i.e. 
$$
\partial_t\Gamma_{ij}^k=-\nabla_i\tilde R_{j}^k\,.
$$
It follows that
$$
\begin{aligned}
\partial_t S=&-\tilde R_{k\bar q}g^{\bar mi}g^{\bar pj}\Gamma_{ij}^k \Gamma_{\bar m\bar p}^{\bar q}
+g_{k\bar q}\tilde R_{\bar b}^{\bar m}g^{\bar bi}g^{\bar pj}\Gamma_{ij}^k \Gamma_{\bar m\bar p}^{\bar q}
+g_{k\bar q}g^{\bar mi}\tilde R_{\bar b}^{\bar p}g^{\bar bj}\Gamma_{ij}^k \Gamma_{\bar m\bar p}^{\bar q}\\
&-g_{k\bar q}g^{\bar mi}g^{\bar pj}\nabla_i\tilde R_{j}^k\Gamma_{\bar m\bar p}^{\bar q}-g_{k\bar q}g^{\bar mi}g^{\bar pj}\Gamma_{ij}^k\nabla_{\bar m}\tilde R_{\bar p}^{\bar q}\,.
\end{aligned}
$$
Moreover
$$
\nabla_{\bar b}S=g_{k\bar q}g^{\bar mi}g^{\bar pj}\nabla_{\bar b}\Gamma_{ij}^k\Gamma_{\bar m\bar p}^{\bar q}+g_{k\bar q}g^{\bar mi}g^{\bar pj}\Gamma_{ij}^k\nabla_{\bar b}\Gamma_{\bar m\bar p}^{\bar q}
$$
and 
$$
\begin{aligned}
\Delta_g S=&\, g^{\bar ba}\nabla_a\nabla_{\bar b}S=
g_{k\bar q}g^{\bar mi}g^{\bar pj}g^{\bar ba}(\nabla_a\nabla_{\bar b}\Gamma_{ij}^k)\Gamma_{\bar m\bar p}^{\bar q}+g_{k\bar q}g^{\bar mi}g^{\bar pj}\Gamma_{ij}^k
g^{\bar ba}(\nabla_a\nabla_{\bar b}\Gamma_{\bar m\bar p}^{\bar q})\\
&\,+g_{k\bar q}g^{\bar mi}g^{\bar pj}\,g^{\bar ba}\nabla_{\bar b}\Gamma_{ij}^k\nabla_a\Gamma_{\bar m\bar p}^{\bar q}+g_{k\bar q}g^{\bar mi}g^{\bar pj}g^{\bar ba}\nabla_a\Gamma_{ij}^k\nabla_{\bar b}\Gamma_{\bar m\bar p}^{\bar q}\\
= &|\nabla \Gamma|_g^2+|\bar \nabla \Gamma|_g^2+g_{k\bar q}g^{\bar mi}g^{\bar pj}g^{\bar ba}(\nabla_a\nabla_{\bar b}\Gamma_{ij}^k)
\Gamma_{\bar m\bar p}^{\bar q}+g_{k\bar q}g^{\bar mi}g^{\bar pj}\Gamma_{ij}^k g^{\bar ba}(\nabla_a\nabla_{\bar b}\Gamma_{\bar m\bar p}^{\bar q})\\
=& \,|\nabla \Gamma|_g^2+|\bar \nabla \Gamma|_g^2-g^{\bar mi}g^{\bar pj}g^{\bar ba}\nabla_aR_{i\bar bj	\bar q}\Gamma_{\bar m\bar p}^{\bar q}
+g_{k\bar q}g^{\bar mi}g^{\bar pj}\Gamma_{ij}^kg^{\bar ba}(\nabla_{\bar b}\nabla_a\Gamma_{\bar m\bar p}^{\bar q})\\
&+g_{k\bar q}g^{\bar mi}g^{\bar pj}\Gamma_{ij}^kg^{\bar ba}R_{a\bar b\bar m }^{\bar u}\Gamma_{\bar u\bar p}^{\bar q}
+g_{k\bar q}g^{\bar mi}g^{\bar pj}\Gamma_{ij}^kg^{\bar ba}R_{a\bar b\bar p }^{\bar u}\Gamma^{\bar q}_{\bar m\bar u}-g_{k\bar q}g^{\bar mi}g^{\bar pj}\Gamma_{ij}^kg^{\bar ba}R_{a\bar b\bar u }^{\bar q}\Gamma^{\bar u}_{\bar m\bar p}\\
=& \,|\nabla \Gamma|_g^2+|\bar \nabla \Gamma|_g^2-g^{\bar mi}g^{\bar pj}g^{\bar ba}\nabla_aR_{i\bar bj	\bar q}\Gamma_{\bar m\bar p}^{\bar q}
-g^{\bar mi}g^{\bar pj}\Gamma_{ij}^kg^{\bar ba}\nabla_{\bar b}R_{\bar m a\bar pk}\\
&+g_{k\bar q}g^{\bar mi}g^{\bar pj}\Gamma_{ij}^k\tilde R_{\bar m }^{\bar u}\Gamma_{\bar u\bar p}^{\bar q}
+g_{k\bar q}g^{\bar mi}g^{\bar pj}\Gamma_{ij}^k\tilde R_{\bar p }^{\bar u}\Gamma^{\bar q}_{\bar m\bar u}
-g_{k\bar q}g^{\bar mi}g^{\bar pj}\Gamma_{ij}^k\tilde R_{\bar u }^{\bar q}\Gamma^{\bar u}_{\bar m\bar p}\,.
\end{aligned}
$$
Then using the second Bianchi identity we get
$$
\begin{aligned}
\Delta_g S=& \,|\nabla \Gamma|_g^2+|\bar \nabla \Gamma|_g^2
-g^{\bar mi}g^{\bar pj}g^{\bar ba}\nabla_iR_{a\bar bj	\bar q}\Gamma_{\bar m\bar p}^{\bar q}
+g^{\bar mi}g^{\bar pj}g^{\bar ba}T_{ai}^kR_{k\bar bj\bar q}\Gamma_{\bar m\bar p}^{\bar q}\\
&-g^{\bar mi}g^{\bar pj}\Gamma_{ij}^kg^{\bar ba}\nabla_{\bar m}R_{a \bar b k \bar p}+g^{\bar mi}g^{\bar pj}g^{\bar ba}T_{\bar b\bar m}^{\bar s}R_{\bar s a \bar p k}\Gamma_{ij}^k\\
&+g_{k\bar q}g^{\bar mi}g^{\bar pj}\Gamma_{ij}^k\tilde R_{\bar m }^{\bar u}\Gamma_{\bar u\bar p}^{\bar q}
+g_{k\bar q}g^{\bar mi}g^{\bar pj}\Gamma_{ij}^k\tilde R_{\bar p }^{\bar u}\Gamma^{\bar q}_{\bar m\bar u}-g_{k\bar q}g^{\bar mi}g^{\bar pj}\Gamma_{ij}^k\tilde R_{\bar u }^{\bar q}\Gamma^{\bar u}_{\bar m\bar p}\\
=&  \,|\nabla \Gamma|_g^2+|\bar \nabla \Gamma|_g^2
-g^{\bar mi}g^{\bar pj}g^{\bar ba}\nabla_i\tilde R_{j	\bar q}\Gamma_{\bar m\bar p}^{\bar q}
+g^{\bar mi}g^{\bar pj}g^{\bar ba}T_{ai}^kR_{k\bar bj\bar q}\Gamma_{\bar m\bar p}^{\bar q}\\
&-g^{\bar mi}g^{\bar pj}\Gamma_{ij}^k\nabla_{\bar m}\tilde R_{k \bar p}+g^{\bar mi}g^{\bar pj}g^{\bar ba}T_{\bar b\bar m}^{\bar s}R_{\bar s a \bar p k}\Gamma_{ij}^k\\
&+g_{k\bar q}g^{\bar mi}g^{\bar pj}\Gamma_{ij}^k\tilde R_{\bar m }^{\bar u}\Gamma_{\bar u\bar p}^{\bar q}
+g_{k\bar q}g^{\bar mi}g^{\bar pj}\Gamma_{ij}^k\tilde R_{\bar p }^{\bar u}\Gamma^{\bar q}_{\bar m\bar u}
-g^{\bar mi}g^{\bar pj}\Gamma_{ij}^k\tilde R_{k\bar u }\Gamma^{\bar u}_{\bar m\bar p}\,.
\end{aligned}
$$
Hence 
$$
\begin{aligned}
(\partial_t-\Delta_g) S= &\,\,-|\nabla \Gamma|_g^2-|\bar \nabla \Gamma|_g^2
-g^{\bar mi}g^{\bar pj}g^{\bar ba}T_{ai}^kR_{k\bar bj\bar q}\Gamma_{\bar m\bar p}^{\bar q}-g^{\bar mi}g^{\bar pj}g^{\bar ba}T_{\bar b\bar m}^{\bar s}R_{\bar s a \bar p k}\Gamma_{ij}^k\\
=& \,\,-|\nabla \Gamma|_g^2-|\bar \nabla \Gamma|_g^2
+g^{\bar mi}g^{\bar pj}g^{\bar ba}T_{ai}^kg_{u\bar q}\nabla_{\bar b}\Gamma_{kj}^{u}\Gamma_{\bar m\bar p}^{\bar q}+g^{\bar mi}g^{\bar pj}g^{\bar ba}T_{\bar b\bar m}^{\bar s}g_{k\bar u}\nabla_a\Gamma_{\bar s\bar p}^{\bar u}\Gamma_{ij}^k\\
\leq & -|\nabla \Gamma|_g^2-|\bar \nabla \Gamma|_g^2+2 |T|_{g}S^{1/2}|\bar \nabla \Gamma|_g\\
\leq & -|\nabla \Gamma|_g^2+ |T|_{g}^2S\,,
\end{aligned}
$$
and finally
$$
(\partial_t-\Delta_g) S\leq |T|_{g}^2\,S\,,
$$
as required. 
\end{proof}

\begin{cor}
We have 
$$
(\partial_t-\Delta_g)S\leq 2| \hat{T}|^2_{g}S+8S^2\,,
$$
where $\hat T$ is the torsion of $\hat g$.
\end{cor}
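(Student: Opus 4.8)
The plan is to bound $|T|_g^2$ by $8S+2|\hat T|_g^2$ and then substitute into the preceding lemma. The geometric input is the comparison of the torsion $T$ of $g$ with the torsion $\hat T$ of the background Chern-flat metric $\hat g$. Since both Chern connections restrict to $\bar\partial$ on the holomorphic tangent bundle, their difference $\Gamma=\nabla^g-\hat\nabla$ is a tensor of type $(1,0)$, and because $\hat\nabla$ has vanishing Christoffel symbols in the left-invariant frame $\{Z_i\}$ its components are exactly the $\Gamma_{ij}^k$ entering \eqref{normgamma}. For any two connections differing by a tensor, the difference of their torsions is the antisymmetrization of that tensor; hence
$$
T_{ij}^k-\hat T_{ij}^k=\Gamma_{ij}^k-\Gamma_{ji}^k.
$$
Concretely, $T_{ij}^k=\Gamma_{ij}^k-\Gamma_{ji}^k-c_{ij}^k$ and $\hat T_{ij}^k=-c_{ij}^k$, where $[Z_i,Z_j]=c_{ij}^kZ_k$ are the fixed structure constants of the frame.

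I would then use that the norm $|\cdot|_g$ is invariant under swapping the two lower indices of $\Gamma$: relabelling the contracted indices in \eqref{normgamma} shows that the tensor $\Gamma^t$ with components $\Gamma_{ji}^k$ satisfies $|\Gamma^t|_g^2=|\Gamma|_g^2=S$. Writing $T=(\Gamma-\Gamma^t)+\hat T$ and applying $|a+b|^2\le 2|a|^2+2|b|^2$ gives
$$
|T|_g^2\le 2|\Gamma-\Gamma^t|_g^2+2|\hat T|_g^2\le 4\bigl(|\Gamma|_g^2+|\Gamma^t|_g^2\bigr)+2|\hat T|_g^2=8S+2|\hat T|_g^2.
$$

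Substituting this into the inequality $(\partial_t-\Delta_g)S\le|T|_g^2S$ of the previous lemma then yields
$$
(\partial_t-\Delta_g)S\le\bigl(8S+2|\hat T|_g^2\bigr)S=2|\hat T|_g^2S+8S^2,
$$
which is the asserted estimate. I do not anticipate a genuine obstacle: once the torsion identity $T-\hat T=\Gamma-\Gamma^t$ is in place, only the triangle inequality and the symmetry $|\Gamma^t|_g=|\Gamma|_g$ remain, and the numerical constants fall out exactly as stated. The single delicate point is the careful separation of the metric-dependent part $\Gamma-\Gamma^t$ from the fixed background torsion $\hat T$, which is precisely what the cancellation of the structure constants $c_{ij}^k$ guarantees.
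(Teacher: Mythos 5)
Your proposal is correct and follows essentially the same route as the paper: both decompose the torsion as $T_{ij}^k=\Gamma_{ij}^k-\Gamma_{ji}^k\pm\hat T_{ij}^k$ (the sign on $\hat T$ is immaterial for norms), use $|\Gamma^t|_g=|\Gamma|_g=S^{1/2}$, deduce $|T|_g^2\leq 8S+2|\hat T|_g^2$, and substitute into the previous lemma. The only cosmetic difference is that the paper gets the constants via the triangle inequality $|T|_g\leq 2S^{1/2}+|\hat T|_g$ followed by $2ab\leq a^2+b^2$, whereas you apply $|a+b|^2\leq 2|a|^2+2|b|^2$ twice; the resulting bound is identical.
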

\begin{proof}
Recalling that $\hat g$ is Chern flat we have that the components of the torsion of $\hat \nabla$ coincide with the components of the brackets with respect to the chosen frame, thus  
$$
T_{ij}^k=\Gamma_{ij}^k-\Gamma_{ji}^k-\hat T_{ij}^k\,. 
$$
Hence
$$
|T|_g\leq 2S^{1/2}+|\hat T|_g
$$
which implies 
$$
|T|^2_g\leq | \hat{T}|^2_g+4S+4  | \hat{T}|_g S^{1/2}\leq 2| \hat{T}|^2_g+8S\,,
$$
and the claim follows. 
\end{proof}

\section{Proof of Theorem \ref{main1}}\label{3}
Let $g(t)$, $t\in [0,\varepsilon]$, be a solution to \eqref{Q=0} and fix a background Chern-flat metric $\hat g$ on $M$. 
In this section we use the same setting adopted in the section \ref{estimates}. In view of Lemma \ref{lemma1} there exist positive uniform constants $K,	\tilde K$ such that 
$$
K\hat g\leq g(t)\leq \tilde K\hat g\,,
$$ 
for every $t\in [0,\varepsilon]$.
Now using the computation of Lemma \ref{lemma1} we have
$$
(\partial_t-\Delta_g)\,{\rm  tr}_{\hat g} g \leq -\hat g^{\bar s r}g^{\bar l k}g^{\bar b a}g_{a\bar l,\bar s }g_{k	\bar b,r}\leq -K g^{\bar s r}g^{\bar l k}g^{\bar b a}g_{a\bar l,\bar s }g_{k	\bar b,r} = - KS \,. 
$$

Let  
 $$
G=\log S+ b\,{\rm  tr}_{\hat g} g\,,
$$
where $b$ is a positive constant to be determined later. 

We have 
$$
\begin{aligned}
 (\partial_t-\Delta_g)\, G & = \frac{ (\partial_t-\Delta_g)S}{S} + \frac{|\nabla S|_g^2}{S^2} + b(\partial_t-\Delta_g)\, {\rm  tr}_{\hat g} g
 \\ &\leq \,2| \hat{T}|^2_g+\frac{|\nabla S|_g^2}{S^2}+(8-bK) S\,.
\end{aligned}
$$
Assume that $G$ has a maximum point  $(\hat x,\hat t)\in M\times [0,\varepsilon]$ with $\hat t>0$. Then 
$$
0\leq 2| \hat{T}|^2_g+\frac{|\nabla S|_g^2}{S^2}+(8-bK) S\quad \mbox{at $(\hat x,\hat t)$}.
$$

From   $\nabla G(\hat x,\hat t)=0$ we have 
$$
\frac{|\nabla S|_{g}^2}{S^2} = b^2 |\nabla {\rm tr}_{\hat g} g|^2_g \quad \mbox{at $(\hat x,\hat t)$}.
$$
Now, since 
$$
\begin{aligned}
\nabla_{k}\,{\rm tr}_{\hat g}g=&\,\nabla_k (\hat g^{\bar rs}g_{s\bar r})\\
=&\, (\nabla_k \hat g^{\bar rs})g_{s\bar r}\\
=&\,\hat g^{\bar ba}\Gamma_{ka}^l g_{l\bar b}\,.
\end{aligned}
$$
We have 
$$
\begin{aligned}
|\nabla\,{\rm tr}_{\hat g}g|_g^2=&\, g^{\bar pk} \hat g^{\bar ba}\Gamma_{ka}^l g_{l\bar b}\hat g^{\bar sr}
\Gamma_{\bar p\bar s}^{\bar q}g_{r\bar q}\\
\leq &\, K^2 g^{\bar pk}g^{\bar ba}\Gamma_{ka}^l g_{l\bar b}g^{\bar sr}
\Gamma_{\bar p\bar s}^{\bar q}g_{r\bar q}
\leq \, K^2 g^{\bar p k} \Gamma_{ka}^a 
\Gamma_{\bar p\bar q}^{\bar q}\\
= &\,K^2|w|_{g}^2\,.
\end{aligned}
$$
Hence if $|w|_g$ is uniformly bounded we have that 
$|\nabla\,{\rm tr}_{\hat g}g|_g^2$ and $\tfrac{|\nabla S|_{g}^2}{S^2}$ are bounded ndependently of $b$. Thus 
$$
(bK-8) S \leq (1+b^2)C\quad \mbox{at $(\hat x,\hat t)$}
$$
for a uniform constant $C$, and for $b>\tfrac{8}{K}$ we have 
$$
S(\hat x,\hat t)\leq C\,,
$$ 
where $C$ does not depend on $g$. 

Hence 
$$
G(x,t)\leq G(\hat x,\hat t)=\log S(\hat x,\hat t)+ b{\rm  tr}_{\hat g} g(\hat x,\hat t)
\leq \log C+bn\tilde K
$$
and 
$$
\log S \leq \log C+bn\tilde K \mbox{ in } M\times [0,\varepsilon]\,.
$$
This proves that $g(t)$ has a $C^1$ a priori bound in $M\times [0,\varepsilon]$.  

\medskip 
Now we can work as in the final step of the proof of \cite[Theorem 6]{estimates}. Our equation can be regarded as a quasi-linear parabolic system and a classical theorem by Ladyzenskaja et al. (\cite{Lady} Theorem 5.1 in Chapter 7, page 586)
implies that 
$$
\|g\|_{C^{2+\alpha,1+\alpha/2}(M\times [0,\varepsilon])}\leq C\,,
$$ 
where $C$ does not depend on $g$. Hence from the parabolic Schauder estimates we deduce higher order estimates on $g$. Then Ascoli-Arzel\`a theorem can be used to prove that actually $g(t)$ can  be extended in $M\times [0,\infty)$. 

\section{Stability: proof of Theorem \ref{stability}}

Now we prove Theorem \ref{stability} showing that Chern-flat metrics on compact complex parallelizable manifolds are 
dynamically stable. 

\medskip 
Fix a background left-invariant metric $\hat g$ on $M$. Without loss of generality we may assume ${\rm vol}_{\hat g}(M)=1$.

\begin{lemma}\label{L2} There exists $\delta>0$ such that if $g(t)$ is a solution of \eqref{Q=0} for $t\in [0,\varepsilon]$ and 
$$
\|g(t)-\hat g\|_{C^{2}_{\hat g}}\leq \delta \quad \mbox{ for every $t\in 	[0,\varepsilon]$}\,,
$$
then 
$$
\|\tilde R(g(t))\|_{L^2_{\hat g}} \leq a {\rm e}^{-b\lambda t} \quad \mbox{ for every $t \in [0,\varepsilon]$\,. }
$$ 
Here $\lambda$ is the first positive eigenvalue of $-\Delta_{\hat g}$, $a$ and $b$ are positive constants depending only on $\delta$ and $a\to 0$, $b\to 1$ as $\delta \to 0^+$. 
\end{lemma}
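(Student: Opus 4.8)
The plan is to control the $L^2$-norm of the second Chern-Ricci form by deriving a differential inequality for $\|\tilde R(g(t))\|^2_{L^2_{\hat g}}$ and then integrating it. Since $\hat g$ is Chern-flat, the linearization $L$ of the operator $g \mapsto -\tilde R(g)$ at $\hat g$ is the Chern Laplacian $\Delta_{\hat g}$, which is self-adjoint and seminegative with respect to the $L^2_{\hat g}$-product; its kernel consists of the $\hat g$-parallel tensors, and on the orthogonal complement the first nonzero eigenvalue of $-\Delta_{\hat g}$ is $\lambda$. The key idea is that along the flow one has $\partial_t g = -\tilde R(g)$, so that
$$
\frac{d}{dt}\|\tilde R(g)\|^2_{L^2_{\hat g}} = 2\langle \partial_t \tilde R(g), \tilde R(g)\rangle_{L^2_{\hat g}}\,,
$$
and $\partial_t \tilde R(g) = L(\partial_t g) + (\text{higher order in } g-\hat g) = -L(\tilde R(g)) + \mathcal{E}$, where $\mathcal{E}$ collects the nonlinear error terms coming from the difference between the full operator and its linearization at $\hat g$.

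First I would write $\tilde R(g)$ explicitly using \eqref{RicDelta}, splitting it into the linear part $-\Delta_{\hat g}(g-\hat g)$ (noting $\tilde R(\hat g)=0$ since $\hat g$ is Chern-flat) plus quadratic-and-higher remainder terms in $h := g-\hat g$ and its first derivatives. Under the $C^2_{\hat g}$-smallness hypothesis $\|h\|_{C^2_{\hat g}}\le \delta$, all the nonlinear error terms $\mathcal{E}$ are bounded pointwise by $C\delta$ times expressions controlled by $\|\tilde R(g)\|$ and $\|\nabla \tilde R(g)\|$. The crucial estimate is the coercivity of the linear part: by the spectral gap of $-\Delta_{\hat g}$ restricted to the image of the flow, one obtains
$$
\langle -\Delta_{\hat g}\,\tilde R(g),\tilde R(g)\rangle_{L^2_{\hat g}} \ge \lambda\,\|\tilde R(g)\|^2_{L^2_{\hat g}}\,,
$$
which after accounting for the error terms and absorbing gradient terms via integration by parts yields
$$
\frac{d}{dt}\|\tilde R(g)\|^2_{L^2_{\hat g}} \le -2b\lambda\,\|\tilde R(g)\|^2_{L^2_{\hat g}}
$$
for a constant $b = b(\delta)$ with $b\to 1$ as $\delta\to 0^+$. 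Gr\"onwall's inequality then gives the claimed exponential decay with $a^2 = \|\tilde R(g_0)\|^2_{L^2_{\hat g}}$, and since $\tilde R(\hat g)=0$ the constant $a$ tends to $0$ with $\delta$.

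The main obstacle I anticipate is twofold. First, one must verify that the spectral gap actually applies to $\tilde R(g(t))$ along the flow, i.e. that $\tilde R(g(t))$ has no $L^2$-component in the kernel of $\Delta_{\hat g}$ (the parallel tensors), or else that any such component decays as well; this is where the specific structure of the parallelizable setting and the balanced/Chern-flat property of $\hat g$ must be used, together with the fact that the volume form is fixed. Second, the differential inequality requires commuting $\partial_t$ with $\tilde R$ and integrating by parts against the non-flat metric $g$ rather than $\hat g$, producing connection terms; these must be shown to be higher order in $\delta$ so that the coefficient of $\|\tilde R(g)\|^2_{L^2_{\hat g}}$ remains close to $-2\lambda$. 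Keeping careful track that every error estimate degenerates precisely as $\delta\to 0$, so that $a\to 0$ and $b\to 1$, will be the most delicate bookkeeping in the argument.
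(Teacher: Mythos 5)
Your overall strategy (differentiate $\|\tilde R\|^2_{L^2_{\hat g}}$ in time, compare the evolution with the linearization $\Delta_{\hat g}$ at the Chern-flat metric, and close with Gr\"onwall) is the same as the paper's. But there is a genuine gap at exactly the step you call ``the crucial estimate'': the coercivity inequality
$$
\langle -\Delta_{\hat g}\,\tilde R(g),\tilde R(g)\rangle_{L^2_{\hat g}} \;\ge\; \lambda\,\|\tilde R(g)\|^2_{L^2_{\hat g}}
$$
is false as stated, because $\tilde R(g(t))$ has, in general, a nonzero $L^2_{\hat g}$-projection onto the kernel of $\Delta_{\hat g}$. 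In the parallelizable setting the components $\tilde R_{k\bar l}$ with respect to the left-invariant frame are functions, the kernel of $\Delta_{\hat g}$ on functions consists of constants, so the kernel on tensors consists of the left-invariant tensors; there is no reason why $\int_M \tilde R_{k\bar l}\,dV_{\hat g}$ should vanish along the flow (inspect \eqref{RicDelta}: neither the term $-\Delta_g g_{k\bar l}$, whose Laplacian is taken with respect to $g$ and not $\hat g$, nor the quadratic term integrates to zero against $dV_{\hat g}$). You flag this precisely as your ``first obstacle,'' but flagging it is not resolving it, and neither of the two ways out you suggest (that the kernel component vanishes, or that it decays on its own) is what actually happens: the harmonic component neither vanishes nor decays intrinsically.

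What the paper does instead is the real content of the proof. It sets $\tilde R^0_{k\bar l}:=\int_M \tilde R_{k\bar l}\,dV_{\hat g}$, which is the harmonic projection, applies the spectral gap only to $\tilde R-\tilde R^0$, obtaining
$$
\langle \Delta_{\hat g}\tilde R,\tilde R\rangle_{L^2_{\hat g}}\le -\lambda\bigl(\|\tilde R\|_{L^2_{\hat g}}-\|\tilde R^0\|_{L^2_{\hat g}}\bigr)^2\,,
$$
and then shows that the harmonic part is dominated by the nonlinear remainder: since $\int_M \Delta_{\hat g}g_{k\bar l}\,dV_{\hat g}=0$ (here the balanced property of $\hat g$ is used), one gets $\|\tilde R^0\|_{L^2_{\hat g}}\le \|\tilde R-\Delta_{\hat g}g\|_{L^2_{\hat g}}$; by Fr\'echet differentiability this is at most $\|g-\hat g\|_{W^{2,2}_{\hat g}}$ for $\delta$ small, by elliptic regularity $\|g-\hat g\|_{W^{2,2}_{\hat g}}\le C\|\Delta_{\hat g}g\|_{L^2_{\hat g}}$, and a triangle-inequality bootstrap gives $\|\Delta_{\hat g}g\|_{L^2_{\hat g}}\le \tfrac{1}{1-C}\|\tilde R\|_{L^2_{\hat g}}$, whence $\|\tilde R^0\|_{L^2_{\hat g}}\le \tfrac{C}{1-C}\|\tilde R\|_{L^2_{\hat g}}$. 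So the kernel component is a small (with $\delta$) fraction of the full norm, and the decay survives with a rate constant $b<1$ degenerating to $1$ as $\delta\to 0^+$. Without this chain of estimates --- or some substitute for it --- your differential inequality cannot be closed, so the proposal as written does not prove the lemma.
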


\begin{proof}

We have 
$$
\partial_tR_{i\bar jk\bar l}=R_{i\bar jk}^m\dot g_{m\bar l}-\nabla_{\bar j}\nabla_{i}\dot g_{k\bar l}\,,
$$
and so 
$$
\partial_{t}\tilde R_{k\bar l}=\partial_{t}(g^{\bar ji} R_{i\bar j k\bar l})=
-g^{\bar ja}\dot g_{a	\bar b}g^{\bar b i} R_{i\bar j k\bar l}+g^{\bar ji} R_{i\bar jk}^m\dot g_{m\bar l}-g^{\bar ji}\nabla_{\bar j}\nabla_{i}\dot g_{k\bar l}\,.
$$
It follows  
$$
\partial_{t}\|\tilde R\|^2_{L^2_{\hat g}}
=2 \int_M (g^{\bar ja}\tilde R_{a	\bar b}g^{\bar b i} R_{i\bar j k\bar l}-g^{\bar ji} R_{i\bar jk}^m\tilde R_{m\bar l}+
g^{\bar ji}\nabla_{\bar j}\nabla_{i}\tilde R_{k\bar l})\hat g^{\bar lr}\hat g^{\bar s k}\tilde R_{r\bar s}\,dV_{\hat g}
$$
and 
for 
$$
\|g(t)-\hat g\|_{C^2_{\hat g}}\leq \delta\,,\quad \mbox{ for every $t\in 	[0,\varepsilon]$}\,,
$$
we have 
$$
\partial_{t}\|\tilde R\|^2_{L^2_{\hat g}}
\leq A \|\tilde R\|_{L^2_{\hat g}}^2
+B\hat g^{\bar lr}\hat g^{\bar s k}\int_M ( \Delta_{\hat g}\tilde R_{k\bar l})\,\tilde R_{r\bar s}\,dV_{\hat g}\,=A\|\tilde R\|_{L^2_{\hat g}}^2 +B\langle \Delta_{\hat g}\tilde R,\tilde R\rangle_{L^2_{\hat g}}\,,
$$
where $A,B$ depend on $\delta$ and 
$A \to 0$, $B\to 1$ if $\delta \to 0^+$. 
Let $\tilde R^0$ be the left-invariant tensor whose components with respect the fixed frame  are 
$$
\tilde R^0_{k\bar l}=\int_M\tilde R_{k\bar l}\,dV_{\hat g}\,.
$$
We observe that $\tilde R^0$ is $\Delta_{\hat g}$-harmonic and  $\tilde R-\tilde R^0$ is 
$L^2_{\hat g}$-orthogonal to the kernel of $\Delta_{\hat g}$. Then 
$$
\begin{aligned}\langle \Delta_{\hat g}\tilde R,\tilde R\rangle_{L^2_{\hat g}}=
\langle \Delta_{\hat g}(\tilde R-\tilde R^0),\tilde R-\tilde R^0\rangle_{L^2_{\hat g}}\leq -\lambda \|\tilde R-\tilde R^0\|_{L^2_{\hat g}}^2\leq  -\lambda (\|\tilde R\|_{L^2_{\hat g}}-\|\tilde R^0\|_{L^2_{\hat g}})^2\,,
\end{aligned}
$$
where $\lambda$ is the first positive eigenvalue of $-\Delta_{\hat g}$. 
Moreover 
$$
\begin{aligned}
\|\tilde R^0\|_{L^2_{\hat g}}^2=&\int_M \hat g^{\bar s r}\hat g^{\bar l k}\tilde R^0_{r\bar l}\tilde R^0_{k\bar s}\,dV_{\hat g}=\hat g^{\bar s r}\hat g^{\bar l k}\left(\int _M  \tilde R_{r\bar l}\,dV_{\hat g}\right)
\left(\int _M  \tilde R_{k\bar s}\,dV_{\hat g}\right)\\
=&\hat g^{\bar s r}\hat g^{\bar l k}\left(\int _M  (\tilde R_{r\bar l}-\Delta_{\hat g}g_{r\bar l})\,dV_{\hat g}\right)
\left(\int _M  (\tilde R_{k\bar s}-\Delta_{\hat g}g_{k\bar s})\,dV_{\hat g}\right)\\
\leq & \|	\tilde R-	\Delta_{\hat g}g\|_{L^2_{\hat g}}^2\,.
\end{aligned}
$$
Now since $g\mapsto \tilde  R(g)$ is Fr\'echet differentiable, for every $\kappa>0$, if we choose $\delta$ small enough we have 
$$
\|\tilde R-\Delta_{\hat g}g \|_{L^2_{\hat g}	
}\leq \kappa \|g-\hat g\|_{W^{2,2}_{\hat g}}\,.
$$
In particular we can fix $\kappa=1$ and 
\begin{equation}\label{k=1}
\|\tilde R-\Delta_{\hat g}g \|_{L^2_{\hat g}	
}\leq \|g-\hat g\|_{W^{2,2}_{\hat g}}
\end{equation}
for $\delta$ small enough. 

Moreover elliptic regularity yields that  
$$
\|g-\hat g\|_{W^{2,2}_{\hat g}}\leq C\|\Delta_{\hat g}g\|_{L^2_{\hat g}}\,.
$$
Now 

$$
\|\tilde R\|_{L^2_{\hat g}}\geq \|\Delta_{\hat g}g\|_{L^2_{\hat g}}- \|\tilde R-\Delta_{\hat g}g\|_{L^2_{\hat g}}
\geq (1- C)  \|\Delta_{\hat g}g\|_{L^2_{\hat g}}\,,
$$
i.e.
$$
  \|\Delta_{\hat g}g\|_{L^2_{\hat g}}\leq \frac{1}{1- C}\|\tilde R\|_{L^2_{\hat g}}\,.
$$
Therefore 
 $$
\|\tilde R-\Delta_{\hat g}g \|_{L^2_{\hat g}}^2\leq \frac{C^2}{(1-C)^2}\|\tilde R\|_{L^2_{\hat g}}^2
$$ 
and so 
$$
\partial_{t}\|\tilde R\|^2_{L^2_{\hat g}}
\leq A \|\tilde R\|_{L^2_{\hat g}}^2 -\lambda B \|\tilde R\|_{L^2_{\hat g}}^2+\lambda B \|\tilde R^0\|_{L^2_{\hat g}}^2\leq  
\left(A -\lambda B+\frac{\lambda B  C^2}{(1- C)^2}\right)\|\tilde R\|_{L^2_{\hat g}}^2\,.
$$
Since \eqref{k=1} holds true if we shrink $\delta$, and $A\to 0$ and $B
\to 1$ as $\delta \to 0$, up to shrinking $\delta$ we may assume 
$$
A -\lambda B+\frac{\lambda B C^2}{(1- C)^2}<0\,.
$$  
Then choosing 
$$
a=\|\tilde R\|_{L^2_{\hat g}}^2 \,,\quad \mbox{ and } b= -\frac{A}{\lambda} +B-\frac{ B C^2}{(1- C)^2}\,,
$$
we get the claim applying Gronwall's lemma.
\end{proof}

\begin{lemma} 
	\label{inter}
	There exists $\delta>0$ such that if $g(t)$ is a solution of \eqref{Q=0} for $t\in [0,\varepsilon]$ and 
$$
\|g(t)-\hat g\|_{C^2_{\hat g}}\leq\delta \quad \mbox{ for every $t\in [0,\varepsilon]$}\,,
$$
then for every $m \in \mathbb N$ and $\tau \in (0,\varepsilon)$
\begin{equation}
\label{Hndecay}
\|\tilde R(g(t))\|_{H^{m}_{\hat g}} \leq C  \|\tilde R(g(0))\|_{L^2_{\hat g}}{\rm e}^{{-C} t}\quad \mbox{ for every } t \in [\tau,\varepsilon]\,.
\end{equation}x
The constant $C$ depends only on $\varepsilon,m,\tau $, $\delta$ and $\hat g$. 
\end{lemma}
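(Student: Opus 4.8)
The plan is to upgrade the $L^2$-decay of $\tilde R$ furnished by Lemma \ref{L2} to every Sobolev norm by a family of interior-in-time (Shi-type) energy estimates. The starting point is the evolution equation for $\tilde R$ already obtained in the proof of Lemma \ref{L2}: substituting $\dot g=-\tilde R$ in it, it acquires the heat-type form
$$
\partial_t\tilde R=\Delta_g\tilde R+\tilde R*R\,,
$$
where $*$ denotes contractions with $g$ and its inverse and $R$ is the Chern curvature of $g$. Under the hypothesis $\|g-\hat g\|_{C^2_{\hat g}}\leq\delta$ both $g^{-1}$ and, by \eqref{tildeR}, the curvature $R$ (a bounded combination of second derivatives of $g$ and squares of first derivatives) are uniformly controlled, so the reaction term is pointwise dominated by $|\tilde R|$. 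The diffusive leading term $\Delta_g$ produces the parabolic smoothing, and the restriction to $t\in[\tau,\varepsilon]$ with $\tau>0$ is exactly what allows the smoothing constant to absorb the negative powers of $t$ that smoothing generates.

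First I would differentiate this equation $k$ times with the Chern connection $\nabla=\nabla_{g(t)}$ and compute $\frac{d}{dt}\|\nabla^k\tilde R\|^2_{L^2_{\hat g}}$. Integrating the Laplacian term by parts produces the good term $-c\,\|\nabla^{k+1}\tilde R\|^2$, while commuting covariant derivatives past $\Delta_g$ and differentiating the reaction term generate a sum of terms of the schematic type $\nabla^a\tilde R*\nabla^b R*\nabla^c\tilde R$ with $a+b+c\leq k$. The delicate point is that $C^2$-smallness bounds $R$ but not its derivatives, so the factors $\nabla^b R\sim\nabla^{b+2}g$ are a priori uncontrolled; here I would invoke \eqref{RicDelta}, which reads $\tilde R=-\Delta_g g+(\partial g)^2$, and combine it with elliptic regularity to get $\|g-\hat g\|_{H^{b+2}_{\hat g}}\lesssim\|\tilde R\|_{H^{b}_{\hat g}}+(\text{lower order})$. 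This replaces every $\nabla^b R$ by $\nabla^b\tilde R$ plus controlled quadratic remainders, so that the resulting trilinear integrals are handled by Gagliardo--Nirenberg interpolation (using that $|\tilde R|$ is bounded by a multiple of $\delta$) and absorbed, up to a small multiple of the top-order good term, into a differential inequality
$$
\frac{d}{dt}\|\nabla^k\tilde R\|^2_{L^2_{\hat g}}\leq -c\,\|\nabla^{k+1}\tilde R\|^2_{L^2_{\hat g}}+C\sum_{j\leq k}\|\nabla^j\tilde R\|^2_{L^2_{\hat g}}\,.
$$

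With these inequalities in hand I would run the standard weighted-energy argument. Set
$$
\Phi_m=e^{2\beta t}\sum_{k=0}^m A_k\,t^k\,\|\nabla^k\tilde R\|^2_{L^2_{\hat g}}\,,
$$
where $\beta$ is strictly smaller than the net $L^2$-decay rate $\beta_0>0$ extracted from Lemma \ref{L2}, and the constants $A_0\gg A_1\gg\dots\gg A_m$ are chosen so that, after using $t^k\leq\varepsilon\,t^{k-1}$, each good term $-c\,A_{k-1}t^{k-1}\|\nabla^k\tilde R\|^2$ dominates the bad contributions $\sim A_k(k+\varepsilon)t^{k-1}\|\nabla^k\tilde R\|^2$ fed from level $k$, while the bottom level has net decay $-2(\beta_0-\beta)A_0\|\tilde R\|^2$ coming from Lemma \ref{L2}. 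This yields $\frac{d}{dt}\Phi_m\leq 0$. Since the weights $t^k$ vanish at $t=0$ we have $\Phi_m(0)=A_0\|\tilde R(0)\|^2_{L^2_{\hat g}}$, whence $t^m\|\nabla^m\tilde R(t)\|^2_{L^2_{\hat g}}\lesssim e^{-2\beta t}\|\tilde R(0)\|^2_{L^2_{\hat g}}$; on $[\tau,\varepsilon]$ one bounds $t^{-m}\leq\tau^{-m}$, and passing from covariant $L^2$-norms to $\|\cdot\|_{H^m_{\hat g}}$ --- legitimate because $g$ is uniformly equivalent to $\hat g$ and their Christoffel symbols differ by uniformly bounded tensors --- gives precisely \eqref{Hndecay}.

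The step I expect to be the main obstacle is the curvature bookkeeping of the second paragraph: making rigorous the replacement of $\nabla^b R$ by $\nabla^b\tilde R$ forces a simultaneous induction on $m$ that controls $\|g-\hat g\|_{H^{m+2}_{\hat g}}$ together with $\|\tilde R\|_{H^m_{\hat g}}$, and one must check that the interpolation constants and the rate $\beta$ degrade only mildly from one level to the next so that a positive exponential rate (depending on $m,\tau,\delta,\hat g$) survives to all orders. Organising the Gagliardo--Nirenberg interpolation of the intermediate terms $\|\nabla^j\tilde R\|^2$, $0<j<k$, compatibly with the time weights $A_kt^k$ is routine but must be done with care.
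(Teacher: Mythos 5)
Your route is genuinely different from the paper's, and its skeleton is viable, but one specific step fails as written.

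For comparison: the paper never differentiates the evolution equation of $\tilde R$. It observes that the hypothesis $\|g-\hat g\|_{C^2_{\hat g}}\leq\delta$ gives uniform bounds on $R$, $T$ and $\nabla T$, invokes the Shi-type estimates for Hermitian curvature flows of Streets--Tian \cite[Theorem 7.3]{HCF} to obtain $\|\nabla^m R(g(t))\|_{C^0}\leq CK\,t^{-m/2}$, hence uniform $C^0$ bounds on $[\tau,\varepsilon]$, and then applies Hamilton's interpolation inequality \cite[Corollary 12.7]{Hamilton}, $\int_M|\nabla^mQ|^2\,dV_{\hat g}\leq C\bigl(\int_M|\nabla^{2m}Q|^2\,dV_{\hat g}\bigr)^{1/2}\|Q\|_{L^2_{\hat g}}$, with $Q=\tilde R$: the bounded top-order seminorm times the exponentially decaying $L^2$-norm of Lemma \ref{L2} gives \eqref{Hndecay}. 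Your weighted Bando--Shi energy scheme re-proves the parabolic smoothing by hand. It is more self-contained, and, if completed, it bounds $\|\tilde R(t)\|_{H^m_{\hat g}}$ by the \emph{first} power of $\|\tilde R(g(0))\|_{L^2_{\hat g}}$ at a rate close to the $L^2$ rate, whereas the interpolation route naturally produces a square root (the paper's intermediate bound is $\|\tilde R\|_{H^m}\leq C\|\tilde R\|_{L^2}^{1/2}$, hence rate $b\lambda/2$). The price is all of the curvature bookkeeping, which the citation approach avoids entirely.

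The gap is exactly at the replacement step, and it is not merely a matter of care. The estimate $\|g-\hat g\|_{H^{b+2}_{\hat g}}\lesssim\|\tilde R\|_{H^b_{\hat g}}+(\text{lower order})$ cannot hold with a harmless lower-order term: left-invariant tensors with constant components are $\Delta_g$-harmonic (this is the same kernel issue the paper must handle via $\tilde R^0$ in the proof of Lemma \ref{L2}), so elliptic regularity forces a term $\|g-\hat g\|_{L^2_{\hat g}}$, which is of size $\delta$ and does not decay. Feeding that into your top-order trilinear term gives $\int_M\tilde R*\nabla^kR*\nabla^k\tilde R\lesssim\delta\,(\|\tilde R\|_{H^k_{\hat g}}+\delta)\,\|\nabla^k\tilde R\|_{L^2_{\hat g}}$, whose piece $\delta^2\|\nabla^k\tilde R\|_{L^2_{\hat g}}$ is \emph{linear}, not quadratic, in decaying quantities; the resulting differential inequality for $\Phi_m$ has a non-decaying source of size $\delta^2$, and Gronwall only yields decay down to a $\delta$-sized constant, not the claimed bound $C\|\tilde R(g(0))\|_{L^2_{\hat g}}{\rm e}^{-Ct}$. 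Proportionality to the initial $L^2$ norm is precisely what the proof of Theorem \ref{stability} consumes, so this loss is fatal to the lemma as stated. The fix is to never estimate $g-\hat g$ itself: $R$ and its covariant derivatives involve only derivatives of $g$, which annihilate the constant kernel, so one should combine \eqref{RicDelta} with Poincar\'e and elliptic regularity in the homogeneous form $\|\partial g\|_{H^{b+1}_{\hat g}}\lesssim\|\Delta_g g\|_{H^b_{\hat g}}\lesssim\|\tilde R\|_{H^b_{\hat g}}+\delta\|\partial g\|_{H^{b+1}_{\hat g}}$, absorb the last term for $\delta$ small, and conclude $\|\nabla^bR\|_{L^2_{\hat g}}\lesssim\|\tilde R\|_{H^b_{\hat g}}$ with no additive constant. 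With this corrected input every term in your energy inequality is at least quadratic in Sobolev norms of $\tilde R$, and the weighted argument closes as you describe.
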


\begin{proof}
Using Hamilton's interpolation theorem for tensors \cite[Corollary 12.7]{Hamilton} we have that for every tensor $Q$ and $m>0$ 
$$
\int_M |\nabla^m Q|_g^2\,dV_{\hat g}\leq C\left(\int_M |\nabla^{2m}Q|^2_{\hat g}\,dV_{\hat g}\right)^{\frac{1}{2}}
\|Q\|_{L^2_{\hat g}}\,,
$$
for a constant $C>0$ depending only $m$ and $\dim M$. Thus in our case 
$$
\int_M |\nabla^m \tilde R(g(t))|_{\hat g}^2\,dV_{\hat g}\leq C\left(\int_M |\nabla^{2m}\tilde R(g(t))|^2_{\hat g}\,dV_{\hat g}\right)^{\frac{1}{2}}
\|\tilde R(g(t))\|_{L^2_{\hat g}}\,.
$$
Assume $\|g(t)-\hat g\|_{C^2}\leq \delta' $ for every $t\in [0,\varepsilon]$, then 
$$
\|R(g(t))\|_{C^0_{g(t)}}\leq K,\quad \|T(g(t))\|_{g(t)}^2\leq K,\quad 
\|\nabla T(g(t))\|_{g(t)}^2\leq K
$$
for a constant $K$ which depends on un upper bound on $\delta'$. Using Shi-type estimates for Hermitian curvature flows 
\cite[Theorem 7.3]{HCF} we have 
$$
\|\nabla^mR(g(t))\|_{C^0_{\hat g}}\leq \frac{CK}{t^{m/2}}
$$
for every $t\in(0,\varepsilon]$, where $C$ depends on $m, K$ and 
$\varepsilon$.  In particular for $\tau \in (0,\varepsilon)$ we have
 $$
\|\nabla^mR(g(t))\|_{C^0_{g(t)}}\leq C \quad \mbox{ for every $t \in [\tau,\varepsilon]$}
$$
where $C$ depends on $m$, $\tau$, $K$ and $\varepsilon$.  It follows that 
$$
\|R(g(t))\|_{H^m_{g(t)}}\leq C \|\tilde R(g(t))\|_{L^2_{g(t)}}^{1/2} \mbox{ for every $t\in [\tau,\varepsilon]$}
$$
where $C$ depends  only on $\varepsilon,m$ and an upper bound on $\delta'$. By  Lemma \ref{L2} we find $0<\delta<\delta'$ such that if $\|g(t)-\hat g\|_{C^{2}_{\hat g}}\leq \delta$ we have
$$
\|\tilde R(g(t))\|_{H^m_{\hat g}}\leq C a {\rm e}^{-b\lambda t/2} \mbox{ for every $t\in [\tau,\varepsilon]$}\,.
$$
where $a\to 0$, $b\to 1$ as $\delta \to 0^+$. The claim follows taking $\delta$ small enough. 
\end{proof}

\begin{proof}[Proof of Theorem $\ref{stability}$]

Let $\delta>0$ be as in lemma \ref{inter}.
Take $\delta_0>0$ such that if  
$$
\|g_0-\hat g\|_{H^{n+3}_{\hat g}}< \delta_0
$$
we have 
$$
\|\tilde R(g_0)\|_{L^{2}_{\hat g}}+\delta_0 <\frac{\delta}{C_n}\,, 
$$
where $C_n$ is the constant given by the Sobolev embedding theorem such that $\|f\|_{C^{2,\alpha}}\leq C_n \|f\|_{H^{n+3}}$ for every symmetric 2-tensor $f$. Let $g$ be the maximal time solution to \eqref{Q=0} with initial datum $g_0$ and let  
$$
\varepsilon=\sup\{s\in [0,\infty)\,\,|\,\, \mbox{for $t\in [0,s]$ } g(t)\mbox{ exists and }\|g(t)-\hat g\|_{C^2_{\hat{g}}}\leq \delta \}.
$$
By the short time existence of the flow $\varepsilon>0$. 
Assume  by contradiction $\varepsilon<\infty$. Note that in  this case $g(t)$ exists for $t \in [0,\varepsilon]$ and

$$
\|g(\varepsilon)-\hat g\|_{C^2_{\hat{g}}}=\delta\,.
$$
Let $\tau \in (0,\varepsilon)$ be such that 
$$
\|g(\tau)-\hat g\|_{H^{n+3}_{\hat{g}}}< \delta_0\,.
$$
Then we have 
$$
\begin{aligned}
\|g(\varepsilon)-\hat g\|_{C^2_{\hat{g}}}\leq &\,C_{n}\|g(\varepsilon)-\hat g\|_{H^{n+3}_{\hat{g}}}\\
\leq &\, C_{n}(\|g(\varepsilon)-g(\tau)\|_{H^{n+3}_{\hat{g}}}+\|g(\tau)-\hat g\|_{H^{n+3}_{\hat{g}}})\\
\leq &\, C_n\left(\int_{\tau}^\varepsilon \|\tilde R(g(s)\|_{H^{n+3}_{\hat g}}\,ds+\delta_0\right)\\
\leq &\, C_n \left(\int_{\tau}^\varepsilon C{\rm e}^{-Ct}\|\tilde R(g_0)\|_{L^{2}_{\hat g}}\,ds+\delta_0\right)\\
\leq &\, C_n \left({\rm e}^{-C\tau}\|\tilde R(g_0)\|_{L^{2}_{\hat g}}+\delta_0\right)\\
\leq &\, C_n \left(\|\tilde R(g_0)\|_{L^{2}_{\hat g}}+\delta_0\right)\\
<&\,\delta
\end{aligned}
$$
which gives us a contradiction. It follows that $g(t)$ exists for every $t \in [0,\infty)$ and
$$
\|g(t)-\hat g\|_{C^2_{\hat g}}\leq \delta \quad \mbox{ for every $t\in [0,\infty)$}\,,
$$
and Lemma \ref{inter} implies that for every $m\in\mathbb N$, $\|\tilde R(g(t))\|_{H^{m}_{	\hat g}}	\to 0$ as $t\to \infty.$ 

Now we show that for every $m\in \mathbb N$,  $g(t)$ converges to a Chern-flat metric in $H^m_{\hat g}$ norm. Let 
$$
g_{\infty}=\int_{0}^{\infty}\tilde R(g(s))\,ds. 
$$
For every $t>0$, using again Lemma \ref{inter} we have 
$$
\|g(t)-g_{\infty}\|_{H^{m}_{\hat g}}=\left\|\int_{t}^{\infty}\tilde R(g(s))\,ds\right\|_{H^{m}_{\hat g}}\leq 
\int_{t}^{\infty}\left\|\tilde R(g(s))\right\|_{H^{m}_{\hat g}}\,ds
\leq {\rm e}^{-Ct}\left\|\tilde R(g(0))\right\|_{L^{2}_{\hat g}}
$$
which implies that $g(t)\to g_{\infty}$ in $C^\infty$-topology. Moreover, since 
$$
\tilde R(g_\infty)=\lim_{t\to 	\infty }\tilde R(g(t))=0\,.
$$

Finally we remark that a Hermitian metric $g$ on a compact complex parallizable manifold having $\tilde R=0$ is always Chern-flat. Indeed from \eqref{RicDelta}  we 
have that equation $\tilde R=0$ implies 
$$
\Delta_g {\rm tr}_{\hat g}g-\hat g^{\bar r s}g^{\bar lk}g^{\bar b a}g_{a\bar s,\bar l }g_{r	\bar b,k}=0\,.
$$
Hence the maximum principle yields that 
$$
\hat g^{\bar r s}g^{\bar lk}g^{\bar b a}g_{a\bar s,\bar l }g_{r	\bar b,k}=0
$$
and that the components of $g$ are consequently constant. 
\end{proof}

\section{Remarks}
\label{conv}

In this last we give some remarks on the behaviour of the flow on some class of examples.

\begin{prop}\label{scase}
Let $M$ be a compact complex parallelizable manifold and let $g_0$ be a Hermitian metric which is diagonal with respect to a left-invariant $(1,0)$-frame. Then the maximal time solution $g$ to \eqref{Q=0} starting from $g_0$ is still diagonal for every $t$, is defined for $t\in [0,\infty)$ and converges to a Chern-flat metric in $C^{\infty}$-topology.  
\end{prop}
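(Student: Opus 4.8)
The plan is to reduce the full flow to a scalar parabolic system and analyse that system. Writing $g_{i\bar j}=\lambda_i\,\delta_{ij}$ with $\lambda_i>0$ for a diagonal metric, I would first verify that $\tilde R(g)$ is again diagonal. Using \eqref{RicDelta} and the fact that $g_{r\bar s}\equiv 0$ for $r\neq s$: the term $\Delta_g g_{r\bar s}$ vanishes (it is the Chern Laplacian of the zero function), and in $g^{\bar lk}g^{\bar ba}g_{a\bar s,\bar l}g_{r\bar b,k}$ the factor $g_{a\bar s,\bar l}$ is supported on $a=s$ while $g_{r\bar b,k}$ is supported on $a=r$, which is impossible for $r\neq s$; hence $\tilde R_{r\bar s}=0$ whenever $r\neq s$. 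Thus the set of diagonal metrics is invariant, and since \eqref{Q=0} is a quasilinear parabolic system with unique solutions, the solution with diagonal initial datum stays diagonal by uniqueness. On the diagonal the equation reads $\partial_t\lambda_r=\Delta_g\lambda_r-\lambda_r^{-1}\sum_k\lambda_k^{-1}|Z_k\lambda_r|^2$, and the substitution $u_r=\log\lambda_r$ cancels the gradient terms, leaving
\[
\partial_t u_r=\Delta_{g(t)}u_r,\qquad \Delta_{g(t)}=\sum_k e^{-u_k}Z_kZ_{\bar k},\quad r=1,\dots,n.
\]
So every $u_r$, and their sum $v=\sum_r u_r=\log\det g$, solves the \emph{same} time-dependent Chern-Laplace heat equation with no zeroth order term.

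For long-time existence I would use Lemma \ref{lemma1}, whose maximum-principle proof produces constants depending only on $g_0$ and $\hat g$, so that $K\hat g\le g(t)\le\tilde K\hat g$ holds uniformly on the whole maximal interval; equivalently the $u_r$ are uniformly bounded and $\Delta_{g(t)}$ is uniformly elliptic. From here there are two equivalent routes. One is to verify the hypothesis of Theorem \ref{main1} directly: since $w_k=\Gamma_{kr}^r=Z_k\log\det g=Z_k v$, one has $|w|_g^2=\sum_k\lambda_k^{-1}|Z_k v|^2$, and a Bernstein-type gradient estimate for the heat equation $\partial_t v=\Delta_g v$ bounds $|w|_g$. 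The other is to observe that the uniformly parabolic scalar system, together with the $C^0$ bound, enjoys interior $C^\alpha$ estimates (Krylov--Safonov) that Schauder theory bootstraps to uniform $C^\infty$ bounds, precluding finite-time blow-up.

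For convergence, note that because $\partial_t u_r=\Delta_{g(t)}u_r$ has no zeroth order term, the maximum principle makes $\max_M u_r$ non-increasing and $\min_M u_r$ non-decreasing, so $\mathrm{osc}_M u_r(t)$ is non-increasing. To upgrade this to decay I would apply a parabolic Harnack inequality to the non-negative solutions $\max_M u_r(t_0)-u_r$ and $u_r-\min_M u_r(t_0)$, obtaining a uniform contraction $\mathrm{osc}_M u_r(t+1)\le\theta\,\mathrm{osc}_M u_r(t)$ with $\theta<1$, and hence exponential decay of the oscillation. Monotonicity of $\max_M u_r$ then forces $u_r(\cdot,t)\to c_r$ uniformly for constants $c_r$, so $g(t)$ converges to the diagonal metric with constant entries $e^{c_r}$, which is left-invariant and therefore Chern-flat. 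Combining the exponential $C^0$ decay with the uniform higher-order bounds from the previous step (via interpolation, or the Shi-type estimates already used in Lemma \ref{inter}) upgrades the convergence to the $C^\infty$-topology.

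The main obstacle is the quantitative oscillation decay: the Laplacian $\Delta_{g(t)}$ is time dependent and only uniformly elliptic, so one cannot invoke a fixed spectral gap and must instead rely on a Harnack inequality whose constants are uniform in time, controlled solely by the ellipticity ratio $\tilde K/K$ of Lemma \ref{lemma1} and the fixed background geometry. A secondary point requiring care is confirming that the constants in Lemma \ref{lemma1} are genuinely uniform over the entire existence interval, since this is exactly what makes the parabolic estimates global in time.
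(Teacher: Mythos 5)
Your reduction is the same as the paper's --- diagonality of $\tilde R$ read off from \eqref{RicDelta}, preservation of the diagonal ansatz by uniqueness, and the substitution $f_r=\log g_{r\bar r}$ turning the flow into the heat equations $\partial_t f_r=\Delta_{g(t)}f_r$ --- and your observation that the maximum-principle constants of Lemma \ref{lemma1} are uniform on the whole maximal interval is exactly what the paper relies on. After that the two arguments genuinely diverge. For long-time existence your Krylov--Safonov route is sound, and in fact more explicit than the paper (which asserts existence rather tersely): each $f_r$ solves a \emph{linear} uniformly parabolic scalar equation whose coefficients $e^{-f_k}$, viewed as given functions of $(x,t)$, are merely bounded and measurable, which is precisely the setting of Krylov--Safonov, and the Schauder bootstrap then yields uniform $C^\infty$ bounds. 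By contrast, your alternative route (a Bernstein estimate on $v=\log\det g$ to bound $|w|_g$ and invoke Theorem \ref{main1}) does not close as written: commuting $Z_m$ with $\Delta_{g(t)}$ produces terms $-g^{\bar kk}(Z_m f_k)(Z_kZ_{\bar k}v)$ involving the gradients of \emph{all} the $f_k$, not just of $v$, so the resulting differential inequality is quadratic in the full gradient and cannot be closed by $|\nabla v|^2$ alone --- this is exactly the difficulty that forces the hypothesis on $|w|_g$ in Theorem \ref{main1} and the auxiliary term $b\,\mathrm{tr}_{\hat g}\,g$ in its proof; since your second route suffices, this is a redundancy rather than a gap. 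For convergence the two proofs differ substantially: the paper runs an energy argument, showing $a_r=\int_M|\hat\nabla f_r|^2_{\hat g}\,dV_{\hat g}$ is non-increasing (using that $\hat g$ is balanced), proving $\ddot a_r\ge C\dot a_r$ by a second-variation computation, concluding $\dot a_r\to 0$, and identifying subsequential $C^\infty$-limits as the constants $\int_M f_r(0)\,dV_{\hat g}$ via conservation of the mean; you instead use monotonicity of $\max_M f_r$ and $\min_M f_r$ together with a uniform parabolic Harnack inequality to contract the oscillation geometrically. Both work. Yours is more elementary and quantitatively stronger --- exponential $C^0$-decay with constants controlled only by the ellipticity ratio of Lemma \ref{lemma1}, no use of the balanced condition, no delicate $\partial_t\dot a_r$ computation --- while the paper's energy method has the advantage of identifying the limit explicitly, which the Harnack argument does not (though the Proposition does not require it). The final upgrade to $C^\infty$-convergence, by interpolating the $C^0$-decay against the uniform higher-order bounds, is the same in both.
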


\begin{proof}
 From formula \eqref{RicDelta} it follows that if $g$ is a diagonal Hermitian metric, then also $\tilde R$ is diagonal and its components $\tilde R_{r \bar r}$ satisfy
$$
\tilde R_{r	\bar r}=-\Delta_g g_{r\bar r}+\sum_k\frac{1}{g_{k\bar k}\,g_{r\bar r}}\,|g_{r\bar r,k}|^2=-\sum_k
\frac{1}{g_{k\bar k}} g_{r\bar r,k\bar k}+\sum_k\frac{1}{g_{k\bar k}\,g_{r\bar r}}\,|g_{r\bar r,k}|^2=-g_{r\bar r}\Delta_g \log g_{r\bar r}  \,.
$$

Let $g$ be the maximal time solution to \eqref{Q=0} on $M$
with starting point a diagonal metric $g_0$. Hence $g(t)$ is diagonal for every $t$ and its components evolve as 
$$
 \dot g_{r	\bar r}=g_{r\bar r}\Delta_g \log g_{r	\bar r}\,,
$$ 
i.e.
$$
\partial_t\log g_{r\bar r}=\Delta_g \log g_{r\bar r}\,. 
$$

Let $f_r=\log g_{r\bar r}$, then 
\begin{equation}\label{log}
\dot f_r=\Delta_g f_r\,.
\end{equation}
Now we consider the Chern-flat metric $\hat g$ which is the identity in the fixed left-invariant frame in which $g$ is diagonal.
Let us consider the quantities
$$
a_r:=\int_M |\hat \nabla f_r|_{\hat g}^2\,dV_{\hat g}
$$
and compute their evolution along the flow taking into account that $\hat g$ is balanced:
$$
\begin{aligned}
\dot a_r & =2\int_M \langle \hat\nabla  \dot f_r,\hat \nabla f_r\rangle_{\hat g}\,dV_{\hat g}
=2\int_M \langle \hat \nabla(  \Delta_{g} f_r),\hat \nabla f_r \rangle_{\hat g}\,dV_{\hat g}=-2 \int_M \Delta_{g} f_r \Delta_{\hat g} f_r \,dV_{\hat g}\\
& \leq -2 \tilde K^{-1}\int_M (\Delta_{\hat g} f_r)^2 \,dV_{\hat g}\,,
\end{aligned}
$$
where $\tilde K>0$ is the lower bound of Lemma \ref{lemma1}.
In particular for every $r$ the function $a_r$ is non-increasing and $g$ is defined in $M\times [0,\infty)$. 
Now we want to compute $\partial_t\dot a_r$. In order to do that we first compute

$$
\begin{aligned}
\partial_t \dot f_r & =\partial_t \Delta_g f_r = 
\partial_t (g^{k \bar k} f_{r, k \bar k}) = - g^{k \bar k} \dot g_{k \bar k} g^{k \bar k} f_{r, k \bar k} + g^{k \bar k}\dot f_{r, k \bar k} \\
& =  - g^{k \bar k} g_{k \bar k} g^{k \bar k} f_{r, k \bar k} \dot f_k + g^{k \bar k}\dot f_{r, k \bar k} = - \dot f_k g^{k \bar k} f_{r, k \bar k}  + \Delta_g \dot f_r\,.
\end{aligned}
$$

Therefore 

$$
\begin{aligned}
	\partial_t \dot a_r & =-2 \partial_t \int_M \Delta_{g} f_r \Delta_{\hat g} f_r \,dV_{\hat g} \\
	&= 2 \int_M  \dot f_k g^{k \bar k} f_{r, k \bar k} \Delta_{\hat g} f_r \,dV_{\hat g} - 2  \int_M \Delta_{g} \dot f_r \Delta_{\hat g} f_r \,dV_{\hat g} - 2 \int_M \dot f_r \Delta_{\hat g} \dot f_r \,dV_{\hat g} \\
& \geq 2 \int_M  \dot f_k g^{k \bar k} f_{r, k \bar k} \Delta_{\hat g} f_r \,dV_{\hat g} - 2 K \int_M \Delta_{g} \dot f_r \dot f_r \,dV_{\hat g} \geq 2 \int_M  \dot f_k g^{k \bar k} f_{r, k \bar k} \Delta_{\hat g} f_r \,dV_{\hat g}\,.
\end{aligned}
$$
By the results of the previous section we have $\dot f_{k} \geq -C$ for a uniform constant. Hence 
$$
\partial_t \dot a_r\geq -2C\int_M  g^{k \bar k} f_{r, k \bar k} \Delta_{\hat g} f_r \,dV_{\hat g}=-2C\int_M  \Delta_g f_{r}\, \Delta_{\hat g} f_r \,dV_{\hat g} = C\dot a_r\,. 
$$	

Hence every $a_r$ is a decreasing non-negative smooth function such that 
$$
\ddot  a_r \geq C\dot a_r
$$
for a uniform positive constant $C$. By Gronwall's lemma it follows that 
$\dot a_r(t)\geq \dot a_r(s){\rm e}^{C(t-s)} $ for every $0\leq s<t$. Setting $b(t)=-\dot a(t)$ we have that $b(t) \leq b(s) {\rm e}^{C(t-s)}$ for every $0 \leq s<t$.
Now the fact that $a$ is nonnegative implies that $\int_0^\tau b(t) dt \leq a(0)$ for every $\tau>0$. Hence $\sum_{m=0}^\infty \int_m^{m+1} b(t)dt$ converges. Thus there is a sequence of times $t_m \in [m,m+1)$ such that $b(t_m) \to 0$. So we have $b(t) \leq b(t_{m-1})e^{2C}$ for all $t \in [m,m+1)$ and hence $\dot a(t)=- b(t) \to 0$ as $t \to \infty$
(Note that a similar argument is used in \cite[Section 2]{PS}).

We now show that each $f_r$ converges to the constant function $\bar f_r\equiv  \int_M f_r(0)\,dV_{\hat g}$ as $t\to \infty$. Any sequence $t_k\to \infty$ in $[0,\infty)$ admits a subsequence $t_{j_k}$ such that 
$f_r(t_{j_{k}})$ converges in $C^{\infty}$ topology to a smooth function $\bar f_{r}$. Since $\lim_{t\to\infty}\dot a_r(t)=0$, $\bar f_{r}$ is constant. Integrating equation \eqref{log}, we deduce that $\int_M f_r\,dV_{\hat g}$ is constant along the flow. Hence $\bar f_{r}\equiv \int_M f_r(0)\,dV_{\hat g}$. It follows that $f_r$ converges in $C^{\infty}$-topology to $\bar f_{r}$, as required.

Hence $g(t)$ converges as $t\to \infty$ in $C^{\infty}$-topology to a metric with constant entries in the fixed left-invariant $(1,0)$-frame and the claim follows. 
\end{proof}

\begin{rem}{\em A special case of Proposition \ref{scase} is when the initial metric is conformally Chern-flat, i.e. when $g_0=f_0\,\hat g$ with $\hat g$ Chern-flat and $f_0\in C^{\infty}(M,\mathbb{R})$ is positive. In this case the solution to \eqref{Q=0} evolves as $g=f\,\hat g$ with $f\in C^{\infty}(M\times [0,\infty),\mathbb R_+)$ and $f$ converges in $C^{\infty}$-topology to the constant function $f_{\infty}\equiv \int_M f(0)\,\,dV_{\hat g}$. }
\end{rem}
 \begin{rem}{\em An easy case in which one can prove long-time existence and convergence of the flow \eqref{Q=0} is the following.
Let $M$ be the total space of a complex  bundle over a torus. Fix a $(1,0)$-frame on $M$ such that the first vectors are tangent to the basis.  Let $g$ be a Hermitian metric on $M$ taking the following expression 
\begin{equation}\label{matrix}
g=\begin{pmatrix}
h & 0\\
0 &  k
\end{pmatrix}
\end{equation}
with $h$ metric on the base and $k$ constant. Then 
$$
\tilde R_g=
\begin{pmatrix}
\tilde{R}_h & 0\\
0 &  0
\end{pmatrix}
$$
where $\tilde{R}_h$ is the the second Chern-Ricci form of $h$. Therefore if the initial metric takes expression \eqref{matrix}, then flow \eqref{Q=0} evolves the component of $g$ along the basis via flow \eqref{Q=0} restricted to the basis , while it fixes the component along the fibers.  In particular if $h$ is K\"ahler the flow on the base is the K\"ahler-Ricci flow and $g$ converges to a Chern-flat metric.  
}
\end{rem}

\end{document}